\setlist{leftmargin=*}
\tikzset{MyLabel/.style={
   auto=right,
   fill=none,
   outer sep=0.2ex}}
\theoremstyle{plain}
\newtheorem{theorem}{Theorem}[section]
\newtheorem{proposition}[theorem]{Proposition}
\newtheorem{lemma}[theorem]{Lemma}
\theoremstyle{definition}
\newtheorem{definition}[theorem]{Definition}
\theoremstyle{remark}
\newtheorem{construction}[theorem]{Construction}
\newcommand{\bbT}{\mathbb{T}}
\newcommand{\injto}{\hookrightarrow}
\newcommand{\Aut}{\operatorname{Aut}}
\newcommand{\restr}{\mathord{\upharpoonright}}
\newcommand{\dotcup}{\mathrel{\dot\cup}}
\newcommand{\epito}{\twoheadrightarrow}
\newcommand{\Cl}{\operatorname{Cl}}
\newcommand{\bb}{\theta}
\newcommand{\GQ}{\operatorname{GQ}}
\newcommand{\PQ}{\operatorname{PQ}}
\newcommand{\lseq}{\prescript{}{=}{}}
\newcommand{\icol}[1]{
  \left(\substack{#1}\right)%
}
\newcommand{\GL}{\operatorname{GL}}
\newcommand{\tz}{\bar{z}}
\newcommand{\ta}{\bar{a}}
\newcommand{\tb}{\bar{b}}
\newcommand{\tc}{\bar{c}}
\newcommand{\tv}{\bar{v}}
\newcommand{\tu}{\bar{u}}
\newcommand{\tw}{\bar{w}}
\newcommand{\tn}{\bar{0}}
\newcommand{\te}{\bar{e}}
\newcommand{\tx}{\bar{x}}
\newcommand{\ty}{\bar{y}}
\newcommand{\F}{\mathbb{F}}
\newcommand{\type}{\mathbb}
\newcommand{\hq}{\widehat{q}}
\newcommand{\hS}{\widehat{S}}
\newcommand{\hQ}{\widehat{Q}}
\newcommand{\hGamma}{\widehat{\Gamma}}
\newcommand{\hcC}{\widehat{\mathcal{C}}}
\author[Ch.\,Pech]{Christian Pech}
\address{Radebeul, Germany}
\email{cpech@freenet.de}
\urladdr{https://www.researchgate.net/profile/Christian\_Pech2}
\author[M.\,Pech]{Maja Pech} 
\address{University of Novi Sad\\Faculty of Sciences\\Department of Mathematics and Informatics} 
\email{maja@dmi.uns.ac.rs}
\urladdr{http://people.dmi.uns.ac.rs/~maja/}
\title{On a family of highly regular graphs by Brouwer, Ivanov, and Klin}
\subjclass[2010]{05E30(51A50)}
\keywords{strongly regular graph, $k$-homogeneous graph, $k$-isoregular graph, $t$-vertex condition, quadratic form}
\begin{document}

\begin{abstract}
	Highly regular graphs for which not all regularities are explainable by symmetries are fascinating creatures. Some of them like, e.g., the line graph of W.~Kantor's non-classical $\GQ(5^2,5)$, are  stumbling stones for existing implementations of graph isomorphism tests. They appear to be extremely rare and even once constructed it is difficult to prove their high regularity. Yet some of them, like the McLaughlin graph on 275 vertices and Ivanov's graph on 256 vertices are of profound beauty.  This alone makes it an attractive goal to strive for their complete classification or, failing this, at least to get a deep understanding of them. Recently, one of the authors discovered new methods for proving high regularity of graphs. Using these techniques, in this paper we study a classical family of strongly regular graphs, originally discovered by A.E.~Brouwer, A.V.~Ivanov, and M.H.~Klin in the late 80s. We analyze their symmetries and show that they are $(3,5)$-regular but not $2$-homogeneous. Thus we promote these graphs to the distinguished club of highly regular graphs with few symmetries.
\end{abstract}
\maketitle

\section{Introduction}

Recall that a simple graph $\Gamma$ is called \emph{regular} if there exists a number $k$, such that each vertex of $\Gamma$ has exactly $k$ neighbors. The concept of regularity can be extended naturally. Roughly speaking for a given configuration of vertices in $\Gamma$ we may count extensions of this configuration to a bigger, given, type of configuration. An example is given by the $k$-isoregular graphs.  A regular graph is called $k$-isoregular graph if for every induced subgraph $\Delta\le\Gamma$ the number of joint neighbors of $V(\Delta)$ in $\Gamma$ depends only on the isomorphism type of $\Delta$. When we talk about high regularity, we have in mind a much more general set of regularity conditions:
\begin{definition}
	A \emph{graph type} $\type{T}$ of order $(m,n)$ is a triple $(\Delta,\iota,\Theta)$, where $\Delta$ and $\Theta$ are graphs of order $m$ and $n$, respectively, and where $\iota\colon\Delta\injto\Theta$ is an embedding. A graph $\Gamma$ is called \emph{$\type{T}$-regular} if either $\Delta$ does not embed into $\Gamma$ or if for all $\kappa\colon\Delta\injto\Gamma$ the number $\#(\Gamma,\type{T},\kappa)$ of embeddings $\hat\kappa\colon\Theta\injto\Gamma$ with $\kappa=\hat\kappa\circ\iota$ does not depend on $\kappa$ (i.e., it is equal to  a constant $\#(\Gamma,\type{T})$).  
	 In the case that $\Delta$ does not embed into $\Gamma$ we define  $\#(\Gamma,\type{T})$ to be equal to $0$.
\end{definition}
We are usually not so much interested into regularities for particular graph types but rather for whole classes. 
\begin{definition}
	A graph is called
	\begin{itemize}
		\item \emph{$(\lseq m, \lseq n)$-regular}, if it is $\type{T}$-regular for each graph type $\type{T}$ of order $(m,n)$,
        \item \emph{$(\lseq m,  n)$-regular}, if it is $(\lseq m, \lseq l)$-regular for all $m\le l\le n$,
        \item \emph{$( m, n)$-regular}, if it is $(\lseq k,  n)$-regular for all  $k\le m$.
	\end{itemize}
\end{definition}
The notion of $(m,n)$-regularity generalizes several classical regularity-concepts for graphs. E.g., the $( 2, 3)$-regular graphs coincide with the \emph{strongly regular graphs} (in the sense of Bose \cite{Bos63}), the $(2,t)$-regular graphs correspond to the  graphs that satisfy the \emph{$t$-vertex condition} (in the sense of Higman \cite{Hig71}, cf.~also \cite{HesHig71}). Finally, the $(k,k+1)$-regular graphs coincide with the \emph{$k$-regular graphs} (in the sense of Gol'fand and Klin \cite{GolKli78}) and with the  \emph{$k$-tuple regular graphs} (in the sense of Buczak \cite{Buc80}). Nowadays, in order to avoid conflicts with existing graph-theoretical terminology, these graphs are called \emph{$k$-isoregular} (cf.~\cite{KliPoeRos88}).
\begin{definition}
	We call a graph $\Gamma$ \emph{highly regular} if there is some $m\ge 2$ and some $n\ge 4$, such that $\Gamma$ is  $( m, n)$-regular.
\end{definition}  
Note that our definition of high regularity excludes the strongly regular graphs that do not satisfy the $4$-vertex condition. The reason for this is that we are ultimately interested in a  classification of highly regular graphs. However, such a classification for strongly regular graphs in general seems hopeless as for certain orders there are so-called prolific constructions (cf.~\cite{Wal71,FDF02,Muz07}).

Most naturally, regularity is induced by symmetry. E.g., if a graph is vertex transitive, then it is also regular. Recall that a graph is called \emph{symmetric} if its automorphism group acts transitively on vertices and arcs (cf. \cite{GodRoy01}). When we talk about highly symmetric graphs, we think about  even stronger conditions:
\begin{definition}
	Let $\Gamma$ and $\Delta$ be graphs. Then $\Gamma$ is called \emph{$\Delta$-homogeneous} if for all $\iota_1,\iota_2\colon\Delta\injto\Gamma$ there exists $\alpha\in\Aut(\Gamma)$ such that $\iota_2=\alpha\circ\iota_1$. 
	It is called \emph{weakly $\Delta$-homogeneous} if for all $\iota_1,\iota_2\colon\Delta\injto\Gamma$ there exist $\alpha\in\Aut(\Gamma)$ and $\beta\in\Aut(\Delta)$, such that $\alpha\circ\iota_1 = \iota_2\circ\beta$.
\end{definition}
Note that many of the common symmetry-conditions naturally translate into special cases of this definition. For instance, vertex transitivity is $K_1$-homogeneity, arc-transitivity is $K_2$-homogeneity, edge-transitivity is weak $K_2$-homogeneity\dots.  
\begin{definition}
	A graph $\Gamma$ is called \emph{$k$-homogeneous} if it is $\Delta$-homogeneous for all graphs $\Delta$ of order $\le k$.  It is called \emph{homogeneous} if it is $k$-homogeneous, for every $k>0$.
\end{definition}
In general, we call a graph  \emph{highly symmetric} if it is $k$-homogeneous, for some $k\ge 2$.  High symmetry implies high regularity:  It is easy to see that every $k$-homogeneous graph is $(k,l)$-regular, for every $l\ge k$. 
Note that the highly symmetric graphs are completely classified up to isomorphism. The homogeneous finite graphs were classified by Gardiner,  Gol'fand and Klin \cite{Gar76,GolKli78}. It was shown by Cameron \cite{Cam80} that every $5$-homogeneous graph is homogeneous. The finite $4$-homogeneous graphs were characterized by Buczak \cite{Buc80}. It turns out that there is up to isomorphism and up to complement just one $4$-homogeneous graph that is not homogeneous, the Schl\"afli graph. The $3$-homogeneous graphs were classified by Cameron and Macpherson \cite{CamMac85}. Finally, the $2$-homogeneous graphs are implicitly known by the classification of rank-3-groups that was carried out by  Bannai, Kantor, Liebler, Liebeck, and Saxl (\cite{Ban72,KanLie82,Lie87,LieSax86}). It is noteworthy that the classification of the $k$-homogeneous graphs for $2\le k\le 4$ relies on the classification of finite simple groups.   

We are mostly interested in highly regular graphs for which not all regularities are explainable by symmetries. Apart from the sheer intellectual  challenge to classify these combinatorial objects, we are interested in such graphs since they play a role in the research about the complexity of the graph isomorphism problem. For existing implementations of graph isomorphism tests (like, e.g., the widely used package \emph{nauty} by B.~McKay \cite{McKPip14}) highly regular graphs with few symmetries form a performance bottleneck. For instance, in its standard settings it takes hours of cpu-time for nauty to compute a canonical labeling of the line graph of the $\GQ(5^2,5)$ constructed by Kantor in \cite{Kan80} (cf.~also \cite{Pay92,PayTha09}). Here the notion $\GQ(s,t)$ refers to \emph{generalized quadrangles} of order $(s,t)$ in the sense of Tits \cite{Tit59}.

Interestingly, there exist highly symmetric graphs for which not all regularities are explainable by symmetries. E.g., the McLaughlin graph on $275$ vertices is $( 4, 5)$-regular but is not $4$-homogeneous. So in  particular it is highly regular. On the other hand it is $3$-homogeneous and thus, according to our definition, it is highly symmetric.

While we know almost everything about highly symmetric graphs, our knowledge about highly regular graphs is still very modest. This is so, even though a considerable amount of research went into their classification during the last few decades. It is generally not so hard to construct a graph with given regularities, but it is much harder to construct one with few symmetries.  The following timeline shows roughly the development of the research since the early seventies:
\begin{description}
	\item[1970] Hestenes and Higman introduce the notion of $(2,t)$-regularity and show that point graphs of generalized quadrangles are $(2,4)$-regular (cf. \cite{Hig71,HesHig71}.
    \item[1984] Farad\v{z}ev, A.A.Ivanov, and Klin  construct a $(2,3)$-regular graph on 280 vertices with Aut($J_2$) as automorphism group, that is not $2$-homogeneous (cf.~\cite{IvaKliFar84,FarKliMuz94}).
    \item[1989] A.V.Ivanov  finds a $(2,5)$-regular graph on 256 vertices, whose subconstituents, both, are $(2,4)$-regular (cf.~\cite{Iva89}).
    \item[1989] Brouwer, Ivanov, and Klin describe a family $\Gamma^{(m)}$ of $(3,4)$-regular graphs that contains Ivanov's graph  as $\Gamma^{(4)}$, and show that their first subconstituents are $(2,4)$-regular but not $2$-homogeneous (cf.~\cite{BroIvaKli89}).
    \item[1994] A.V.Ivanov  discovers another infinite family $\widehat{\Gamma}^{(m)}$ of $(2,4)$-regular graphs (cf.~\cite{Iva94}).
    \item[2000] Reichard shows that both, $\Gamma^{(m)}$ and $\widehat{\Gamma}^{(m)}$ are  $(2,5)$-regular, for all $m\ge 4$. Moreover, he shows that the graph  discovered in 1984 is $(2,4)$-regular (cf.~\cite{Reich00}).
    \item[2003] Reichard shows that point graphs of $\GQ(s,t)$ are $(2,5)$-regular, and that the point graphs of $\GQ(q,q^2)$ are $(2,6)$-regular (cf.~\cite{Reich03}). 
    \item[2003] Klin, Meszka, Reichard, and Rosa identify the smallest $(2,4)$-regular graph that is not $2$-homogeneous. It has parameters $(v,k,\lambda,\mu)=(36,14,4,6)$ (cf.~\cite{KliMesReiRos05}).
    \item[2004] CP shows that the  point graphs of $\PQ(s,t,\mu)$ are $(2,5)$-regular. Here $\PQ(s,t,\mu)$ refers to the \emph{partial quadrangles} of order $(s,t,\mu)$ in the sense of Cameron \cite{Cam75}.
    \item[2005] Reichard shows that the point graphs of $\GQ(q,q^2)$ are $(2,7)$-regular (cf.~\cite{Rei15}).
    \item[2007] CP shows that the point graphs of $\PQ(q-1,q^2,q^2-q)$ are $(2,6)$-regular.
    \item[2007] Klin and CP find two self-complementary $(2,4)$-regular graphs that are not $2$-homoge\-neous.
    \item[2014] CP shows that the point graphs of $\GQ(q,q^2)$ are $(3,7)$-regular (cf.~\cite{Pec14}).
\end{description}
As can be read of this timeline much work had to be put into proving high regularity for graphs that were already known. The reason for the difficulties is that with growing $m$ and $n$ we experience a combinatorial explosion of the number of graph types of order $(m,n)$. For instance, there are 20.364 pairwise non-isomorphic graph types of order $(3,7)$.

Interestingly, sometimes high regularity implies high symmetry. It was shown independently by Buczak, Gol'fand, and Cameron, that every $(5,6)$-regular graph is already homogeneous. Thus, when classifying highly regular graphs, we may restrict our attention to $(m,n)$-regular graphs for which $m<n$ and for which $m<5$. Indeed, as was mentioned above, there is known only one $(4,5)$-regular graph that is not $4$-homogeneous---the McLaughlin graph. Non-$3$-homogeneous, $(3,t)$-regular graphs for $t\ge 4$ appear to be extremely rare. In this paper we are going to uncover another infinite family of $(3,5)$-regular graphs that are not $2$-homogeneous. We will do so by giving a complete analysis of the family $\Gamma^{(m)}$ originally discovered by Brouwer, Ivanov, and Klin. 

\section{Constructions and results}
In \cite{Iva89} A.V.~Ivanov constructed  a  $( 2, 5)$-regular graph $\Gamma^{(4)}$ with $256$ vertices and valency $120$.  The automorphism group of this graph acts transitively on vertices and arcs but not on non-arcs. In particular, $\Gamma^{(4)}$ is not $2$-homogeneous. 
Ivanov showed further that the first and second subconstituents of $\Gamma^{(4)}$ are $( 2, 4)$-regular. Here the \emph{first (the second) subconstituent} of a graph $\Gamma$ with respect to a vertex $v\in V(\Gamma)$ is the subgraph of $\Gamma$ induced by all the neighbors (all the non-neighbors) of $v$ in $\Gamma$. The first and the second subconstituent of $\Gamma$ with respect to $v$ are denoted by $\Gamma_1(v)$ and by $\Gamma_2(v)$, respectively. Clearly, if $\Aut(\Gamma)$ acts transitively on vertices, then all first subconstituents (all second subconstituents) are mutually isomorphic. In this case, if the vertex with respect to which we take the subconstituent is not important, then instead of $\Gamma_i(v)$ we write just $\Gamma_i$ ($i\in\{1,2\}$).   

It is well known that a strongly regular graph $\Gamma$ is $( 3, 4)$-regular if and only if its  subconstituents $\Gamma_i(v)$ are strongly regular with parameters independent from $v\in V(\Gamma)$ (for a proof see, e.g., \cite[Proposition 4]{Rei15}). Thus, Ivanov's graph $\Gamma^{(4)}$ is $( 3, 4)$-regular.  

In \cite{BroIvaKli89} a wide class of strongly regular graphs is described of which Ivanov's graph is a special case. We are not going to repeat the construction in full generality but only  as far as it touches our interests. In particular, only one  series of strongly regular graphs from  \cite{BroIvaKli89} consists of $( 3, 4)$-regular graphs.  This is the one that we consider in the sequel. A first construction goes as follows:  
\begin{construction}[{\cite{BroIvaKli89}}]\label{const1}
	Consider the vector space $\F_2^{2m}$. Let $q\colon \F_2^{2m}\to\F_2$ be a non-degenerate quadratic form over $\F_2^{2m}$ of maximal Witt index. Let $Q\subseteq\F_2^{2m}$ be the quadric defined by $q$, and let $S\le \F_2^{2m}$ be a maximal singular subspace for $q$. Now define  $\Gamma^{(m)}\coloneqq (V^{(m)},E^{(m)})$ according to
	\begin{equation*}
		V^{(m)} \coloneqq  \F_2^{2m},\quad E^{(m)} \coloneqq  \{(\tv,\tw)\mid \tw-\tv\in Q\setminus S\}.
	\end{equation*}
\end{construction}
A first analysis of these graphs was given in \cite{BroIvaKli89}. Further steps were taken in  \cite{Iva94} and \cite{Reich00}. In the following we collect what is known about the graphs $\Gamma^{(m)}$ and their subconstituents and what is relevant for this paper:
\begin{itemize}
	\item $\Gamma^{(m)}$ is strongly regular (\cite[Section 2]{BroIvaKli89}),
	\item $\Gamma^{(m)}$ is symmetric (\cite[Section 3]{Iva94}),
	\item  the first subconstituent $\Gamma_1^{(m)}$ is $( 2, 4)$-regular (\cite[Theorem 1]{BroIvaKli89}),
	\item the second subconstituent $\Gamma_2^{(m)}$ is strongly regular (\cite[Section 4]{BroIvaKli89}),
	\item  if  $m\ge 4$ then $\Aut(\Gamma_1^{(m)})$ has rank $4$ (\cite[Theorem 1]{BroIvaKli89}), 
	\item $\Gamma^{(m)}$ is $( 3, 4)$-regular (this is a direct consequence of the previous items (cf.\ also \cite[Proposition 4]{Rei15}),
	\item $\Gamma^{(m)}$ is $( 2, 5)$-regular (\cite[Theorem 7]{Reich00}).
\end{itemize} 
The parameters of $\Gamma^{(m)}$ and its subconstituents are given in the following table. Here and below, in order to save space and to improve readability, we denote the number $2^{m-3}$ by $\theta_m$.
\[
\resizebox{\textwidth}{!}{$\displaystyle
\renewcommand{\arraystretch}{2.0}\begin{array}{l|cccccccc}
 & \multicolumn{1}{c}{v} & \multicolumn{1}{c}{k} & \multicolumn{1}{c}{\lambda} & \multicolumn{1}{c}{\mu} & \multicolumn{1}{c}{r} & \multicolumn{1}{c}{s} & \multicolumn{1}{c}{f} & \multicolumn{1}{c}{g} \\\hline
 \Gamma^{(m)} & 64\bb_m^2 & 4\bb_m(8\bb_m-1) & 4\bb_m(4\bb_m-1) & 4\bb_m(4\bb_m-1) & 4\bb_m & -4\bb_m & 4\bb_m(8\bb_m-1) & (4\bb_m+1)(8\bb_m-1)	\\
 \Gamma_1^{(m)} & 4\bb_m(8\bb_m-1) & 4\bb_m(4\bb_m-1) & 2\bb_m(4\bb_m-1) & 4\bb_m(2\bb_m-1) & 4\bb_m & -2\bb_m & \dfrac{(4\bb_m-1)(8\bb_m-1)}{3} & \dfrac{4(4\bb_m-1)(4\bb_m+1)}{3}\\
 \Gamma_2^{(m)} & (4\bb_m+1)(8\bb_m-1) & 16\bb_m^2 & 2\bb_m(4\bb_m-1) & 8\bb_m^2 & 2\bb_m & -4\bb_m & \dfrac{4(4\bb_m+1)(4\bb_m-1)}{3} & \dfrac{2(2\bb_m+1)(8\bb_m-1)}{3}
\end{array}$}
\]
Let us make Construction~\ref{const1} more concrete:
\begin{construction}
	Construction~\ref{const1} requires a non-degenerate quadratic form on $\F_2^{2m}$ of maximal Witt index.  Up to equivalence, there is exactly one such quadratic form and its Witt index is $m$. Moreover, it does not matter which quadratic form from this equivalence class we choose as two equivalent forms will lead to isomorphic graphs. For the rest of the paper we will consider  
	\[
	q^{(m)}(x_1,\dots,x_m,y_1,\dots,y_m)\coloneqq \sum_{i=1}^m x_iy_i.
	\]
	It is convenient to identify $\F_2^{2m}$ with the isomorphic vector space $(\F_2^m)^2$ whose elements are of the shape $\tv=\icol{\tv_1\\\tv_2}$, where $\tv_1$ and $\tv_2$ are binary vectors of length $m$. Note that with this identification we have 
	\[
	q^{(m)}(\tv)= q^{(m)}\icol{\tv_1\\\tv_2}= \tv_1^{T}\tv_2.
	\]
	The quadric $Q_m$ induced by $q^{(m)}$ consists of all vectors $\tv$ such that $\tv_1^T\tv_2=0$. A maximal singular subspace $S_m$  is given by the set of all $\tv\in\F_2^{2m}$ for which $\tv_2=\tn$ (here and below, by $\tn$ we denote the zero vector; in each case the length of $\tn$ will be clear from the context). Now we can repeat the construction of $\Gamma^{(m)}=(V^{(m)},E^{(m)})$ in more concrete terms:
	\[
		V^{(m)}=\F_2^{2m},\quad E^{(m)}=\{(\tv,\tw)\mid (\tv_1+\tw_1)^T(\tv_2+\tw_2)=0, \tv_2\neq\tw_2\}. 
	\]
	From now on, whenever we talk about the graphs $\Gamma^{(m)}$, we have in mind this model.
\end{construction}

Our main result is:
\begin{theorem}\label{mainthm}
	Let $m\ge 4$ be a natural number. Then
	\begin{enumerate}
		\item $\Gamma^{(m)}$ is not $2$-homogeneous; the orbitals of $\Aut(\Gamma^{(m)})$ are given  by the following binary relations on $\F_2^{2m}$:
		\begin{align*}
		\varrho_1^{(m)} &= \{(\tv,\tw)\mid \tv=\tw\}, & \varrho_2^{(m)} &= \{(\tv,\tw)\mid \tv+\tw\in S_m\setminus\{\tn\}\,\},\\
		\varrho_3^{(m)} &= \{ (\tv,\tw)\mid  \tv+\tw\in Q_m\setminus S_m\}, & \varrho_4^{(m)} &= \{(\tv,\tw)\mid \tv+\tw\notin Q_m\};
		\end{align*}
		\item the relational structure $\mathcal{C}^{(m)}\coloneqq (\F_2^{2m};\,\varrho_1^{(m)},\varrho_2^{(m)},\varrho_3^{(m)},\varrho_4^{(m)})$ is $3$-homogeneous, i.e., every isomorphism between relational substructures of at most three elements extends to an automorphism,
		\item $\Gamma^{(m)}$ is $( 3, 5)$-regular,
		\item $\Gamma_1^{(m)}$ is $(2,4)$-regular but not $2$-homogeneous (already known from \cite{BroIvaKli89}),
		\item $\Gamma_2^{(m)}$ is $(2,4)$-regular but not $1$-homogeneous.
	\end{enumerate}
\end{theorem}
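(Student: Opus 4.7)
The approach is to first identify a large subgroup $G \le \Aut(\Gamma^{(m)})$ whose orbitals are already the four relations $\varrho_i^{(m)}$, and then show that $\Aut(\Gamma^{(m)})$ cannot merge any of them. Let $T \cong \F_2^{2m}$ denote the translation group and let $P$ be the stabilizer of $S_m$ in $\Aut(q^{(m)})$, i.e., in the orthogonal group of $q^{(m)}$. The semidirect product $G \coloneqq T \rtimes P$ clearly preserves $Q_m\setminus S_m$ and hence embeds into $\Aut(\Gamma^{(m)})$. Witt's extension theorem, applied to the parabolic $P$, yields the four $P$-orbits $\{\tn\}$, $S_m\setminus\{\tn\}$, $Q_m\setminus S_m$, $\F_2^{2m}\setminus Q_m$ on $\F_2^{2m}$, and translation invariance then makes these the four orbitals of $G$. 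For part (1), the orbitals of $\Aut(\Gamma^{(m)})$ must be unions of those of $G$; since $\varrho_3^{(m)}$ is the edge set, only $\varrho_2^{(m)}$ and $\varrho_4^{(m)}$ could conceivably merge. The plan to rule this out is to use the Hoffman bound (applied to the eigenvalue $s=-4\bb_m$ from the parameter table) to see that the independence number of $\Gamma^{(m)}$ equals $2^m = |S_m|$, and then to observe that every pair in $\varrho_2^{(m)}$ lies in a coset of $S_m$ (an independent set attaining this maximum), whereas no pair in $\varrho_4^{(m)}$ does. This is a graph-theoretic invariant, so it simultaneously excludes the merge and proves non-$2$-homogeneity.

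For part (2), translation invariance reduces $3$-homogeneity of $\mathcal{C}^{(m)}$ to showing that $P$ acts transitively on ordered pairs $(\tu,\tw)$ of nonzero vectors with each admissible triple of orbit labels for $\tu$, $\tw$, and $\tu+\tw$. Witt's theorem gives the analogous transitivity of the full $\Aut(q^{(m)})$ on pairs with prescribed $(q^{(m)}(\tu),q^{(m)}(\tw),B(\tu,\tw))$, where $B$ is the associated bilinear form. To descend from $\Aut(q^{(m)})$ to the $S_m$-stabilizer $P$ I would pick a convenient representative for each admissible label triple and then apply a relative version of Witt's theorem, extending a partial isometry that already respects $S_m$ to a full isometry still preserving $S_m$. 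This amounts to a short finite case analysis.

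For part (3), the $3$-homogeneity of $\mathcal{C}^{(m)}$ immediately yields, for every $\mathcal{C}^{(m)}$-type on three vertices, a constant number of extensions to each $\mathcal{C}^{(m)}$-type on five vertices. The subtlety is that a single $3$-vertex $\Gamma^{(m)}$-subgraph $\Delta$ may admit several $\mathcal{C}^{(m)}$-refinements (one per labelling of its non-edges by $\varrho_2^{(m)}$ or $\varrho_4^{(m)}$), and likewise for a $5$-vertex target $\Theta$. The plan is to write the number of $\Theta$-extensions of a given embedding $\kappa\colon\Delta\injto\Gamma^{(m)}$ as a sum of $\mathcal{C}^{(m)}$-extension counts over all $\mathcal{C}^{(m)}$-refinements of $\Theta$ compatible with $\kappa$, and to show that this sum is independent of the $\mathcal{C}^{(m)}$-type of the image of $\kappa$. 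The known $(2,5)$-regularity from \cite{Reich00} supplies linear identities among these counts that, together with $3$-homogeneity of $\mathcal{C}^{(m)}$, should force the desired independence. This is the technical core of the argument and I expect it to draw on the high-regularity machinery alluded to in the introduction.

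For parts (4) and (5), $(2,4)$-regularity of $\Gamma_1^{(m)}$ is quoted from \cite{BroIvaKli89}, and non-$2$-homogeneity of $\Gamma_1^{(m)}$ is immediate from the rank-$4$ automorphism group in \emph{loc.~cit.}, which yields at least two non-edge orbits. The $(2,4)$-regularity of $\Gamma_2^{(m)}$ is inherited from part (3) by specialising to $3$-configurations of $\Gamma^{(m)}$ that contain the base vertex $v$ together with two non-neighbours of $v$, and extending to the analogous $5$-configurations. The remaining and most delicate statement is the failure of $1$-homogeneity of $\Gamma_2^{(m)}$: the vertex set of $\Gamma_2^{(m)}$ splits into two $\Aut(\Gamma^{(m)})_v$-orbits, according to whether a non-neighbour of $v$ is of type $\varrho_2^{(m)}$ or $\varrho_4^{(m)}$, and I must show that no graph automorphism of $\Gamma_2^{(m)}$ alone mixes them. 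The plan is to exhibit a purely graph-theoretic invariant of $\Gamma_2^{(m)}$ distinguishing the two orbits, for instance the isomorphism type of the first subconstituent of $\Gamma_2^{(m)}$ at a vertex, or a count of some specific induced $5$-vertex subgraph through a vertex; the clearly different local structure of the two vertex types in $\mathcal{C}^{(m)}$, coming from the refined parameters of the $(3,4)$-regular $\Gamma^{(m)}$, should yield such an invariant once one correctly forgets $v$ and the colouring.
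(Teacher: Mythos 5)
Your reduction of part (1) to the non-fusion of $\varrho_2^{(m)}$ and $\varrho_4^{(m)}$ is correct (and your group $T\rtimes P$ is exactly the paper's $G_m$), but the invariant you propose to separate them is provably false. The Hoffman bound does give independence number $2^m$, attained by the cosets of $S_m$; however, maximum independent sets need not be cosets of $S_m$, and $\varrho_4^{(m)}$-pairs do lie in them. Concretely, take $S'=\{\icol{\tx\\\tn}\mid \tx(1)=0\}$ and $\tu=\icol{\te_1\\\te_1}$; then $W=S'\cup(\tu+S')$ has size $2^m$, every nonzero difference of elements of $W$ lies either in $S_m$ or outside $Q_m$ (for $\tx(1)+\tx'(1)=1$ one has $(\tx+\tx')^T\te_1=1$), so $W$ is independent, and it contains the $\varrho_4^{(m)}$-pair $(\tn,\tu)$. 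By transitivity of $G_m$ on $\varrho_4^{(m)}$, \emph{every} $\varrho_4^{(m)}$-pair lies in a maximum independent set, so the invariant does not distinguish the two classes at all. The paper separates them by a subtler second-order invariant: for the non-arcs $(\tz,\ta)$ and $(\tz,\tb)$ it compares the graphs $\Upsilon_{\ta},\Upsilon_{\tb}$ induced on the common neighbours of $\tz$ that are non-neighbours of $\ta$ resp.\ $\tb$, showing $\Upsilon_{\ta}\cong\Gamma^{(m-1)}$ while $\Upsilon_{\tb}$ contains a triangle with $\bb_m(2\bb_m-3/2)$ common neighbours instead of $\bb_m(2\bb_m-1)$. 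Note that this same computation is what the paper uses for part (5) (the two vertex types of $\Gamma_2^{(m)}$ have non-isomorphic first subconstituents), so your part (5) plan also hinges on an invariant you have not produced; your parts (2) and (4), by contrast, follow the paper's route in spirit (the paper proves the transitivity of the $S_m$-stabilizer by explicit matrices, Lemmas~\ref{symcond}--\ref{suborbs}, rather than a relative Witt theorem, and then runs an eight-case analysis).

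Part (3) is also not an argument yet. The hope that $(2,5)$-regularity of $\Gamma^{(m)}$ together with $3$-homogeneity of $\mathcal{C}^{(m)}$ ``should force'' the extension counts to be independent of the $\mathcal{C}^{(m)}$-colouring of the base triple is exactly the point at issue: when the $3$-vertex base has non-edges, its $\mathcal{C}^{(m)}$-refinement is not unique, and the equality of the resulting sums of coloured extension numbers is a genuine numerical coincidence that has to be verified, not a formal consequence of the ingredients you list (if it were, the paper's computations would be unnecessary). The paper makes this feasible by first invoking the closure criterion of \cite[Corollary~3.41]{Pec14}, which since $\Gamma^{(m)}$ is already $(3,4)$-regular reduces the $148$ graph types of order $(3,5)$ to the four types $\bbT_1,\dots,\bbT_4$ with $4$-connected closure; $\bbT_1$ then follows from $3$-homogeneity of $\mathcal{C}^{(m)}$, $\bbT_2$ from the $(2,4)$-regularity of $\Gamma_1^{(m)}$, and for $\bbT_3$ and $\bbT_4$ one must actually compute, for each admissible colouring of the base, the number of arcs among the common neighbours --- done via explicit equitable partitions of these common-neighbour graphs and the structure constants $p_{ij}^k(m-2)$ --- and check that the totals agree. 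Without this reduction and these verifications (or a proof of the linear-algebraic ``forcing'' you allude to), the core of part (3) is missing; your derivation of the $(2,4)$-regularity of $\Gamma_2^{(m)}$ from part (3) is fine once part (3) is established.
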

The rest of the paper is devoted to the proof of this result. 

\section{Symmetries of the graphs $\Gamma^{(m)}$}
Let us  have a look onto the automorphisms of $\Gamma^{(m)}$. By $\GL(n,2)$ we denote the group of regular $n\times n$-matrices over $\F_2$. Clearly, $\Gamma^{(m)}$ is invariant under all affine transformations $\varphi_{A,\tw}\colon\tv\mapsto A\tv+\tw$ for which $A\in\GL(2m,2)$ preserves $Q_m$ and $S_m$ setwise. Let us denote this group  by $G_m$ and the stabilizer of $\tn$ in $G_m$ by $H_m$. Then we have 
\begin{lemma}\label{HM}
	Let $A\in \GL(m,2)$, and let $S$ be a symmetric $m\times m$-matrix over $\F_2$ with $0$ diagonal. Then 
	\[
	\begin{pmatrix}
		A & AS\\
		O & (A^T)^{-1}
	\end{pmatrix}
	\]
	is an element of $H_m$ (here and below $O$ denotes the zero-matrix). Moreover, every element of $H_m$ is obtained in this way. 
\end{lemma}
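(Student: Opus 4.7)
The plan is to determine the elements of $H_m$ by decomposing a general stabilizer element into $m\times m$ blocks, writing $M = \begin{pmatrix} A & B \\ C & D \end{pmatrix}$, and translating the two defining conditions (setwise preservation of $S_m$ and of $Q_m$) into explicit constraints on $A, B, C, D$. The converse direction will then be a direct verification.

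First I would use $S_m = \F_2^m\times\{\tn\}$: applying $M$ to $\icol{\tu \\ \tn}$ produces $\icol{A\tu \\ C\tu}$, so preservation of $S_m$ for every $\tu \in \F_2^m$ forces $C = O$, and invertibility of $M$ then forces $A, D \in \GL(m,2)$. For the $Q_m$-condition I would first observe that since $q^{(m)}$ is $\{0,1\}$-valued and $M$ is a bijection of $\F_2^{2m}$, setwise preservation of $Q_m$ is equivalent to pointwise preservation of $q^{(m)}$; expanding
\[
q^{(m)}(M\tv) = (A\tv_1 + B\tv_2)^T D\tv_2 = \tv_1^T A^T D \tv_2 + \tv_2^T B^T D \tv_2
\]
and matching with $\tv_1^T\tv_2$ splits the condition into a bilinear part, giving $A^T D = I$ and hence $D = (A^T)^{-1}$, and a purely quadratic part, $\tv_2^T (B^T D) \tv_2 \equiv 0$. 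Writing $S = A^{-1} B$ (so $B = AS$) one obtains $B^T D = S^T A^T D = S^T$, and the quadratic identity then forces $S^T$, hence $S$, to be symmetric with zero diagonal. Conversely, for any matrix of the stated form one computes $q^{(m)}(M\tv) - q^{(m)}(\tv) = \tv_2^T S^T \tv_2 = 0$, and $M$ is block-upper-triangular with invertible diagonal blocks, so $M \in H_m$.

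The only real subtlety is the characteristic-$2$ passage from ``$\tv_2^T T \tv_2 \equiv 0$ as a function $\F_2^m \to \F_2$'' to ``$T$ is symmetric with zero diagonal,'' since the identity $x^2 = x$ in $\F_2$ prevents a naive comparison of polynomial coefficients. Evaluating this function first on the standard basis vectors $e_k$ (which yields $T_{kk} = 0$) and then on pairwise sums $e_k + e_l$ (which then yields $T_{kl} + T_{lk} = 0$) is enough to pin down the condition, and this is the step I would present with the most care.
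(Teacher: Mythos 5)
Your proposal is correct and proceeds essentially as the paper does: block-decompose $M$, use preservation of $S_m$ to force the lower-left block to vanish, translate preservation of $Q_m$ into the identity $\tv_1^TA^TD\tv_2+\tv_2^TB^TD\tv_2=\tv_1^T\tv_2$, and extract $D=(A^T)^{-1}$ together with the symmetric, zero-diagonal condition on $S=A^{-1}B$ by testing on basis vectors and their pairwise sums, with the routine verification of the converse. The one small (and slightly cleaner) deviation is that you upgrade setwise preservation of $Q_m$ to the pointwise identity $q^{(m)}\circ M=q^{(m)}$ (legitimate, since $M$ is a bijection and $q^{(m)}$ is $\{0,1\}$-valued), so the bilinear identity holds for all pairs $(\te_i,\te_j)$ including $i=j$ and $A^TD=I$ falls out directly, whereas the paper only has the condition on vectors in $Q_m$ (hence only $i\neq j$) and must invoke the regularity of $A$ and $C$ to pin down the diagonal of $A^TC$ --- a step your argument avoids.
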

\begin{proof}
	Let $M\in H_m$. Then $M$ can be decomposed into $m\times m$-blocks like
	\[
	M=\begin{pmatrix}
		A & B \\
		D & C
	\end{pmatrix}.
	\]
	As $M$ preserves $S_m$, we have $D=O$. Let $\tv\in V(\Gamma^{(m)})$. Then
	\[
	 M\tv = \begin{pmatrix}
	 	A & B \\
	 	O & C
	 \end{pmatrix}
	 \begin{pmatrix}
	 	\tv_1\\
	 	\tv_2
	 \end{pmatrix}= \begin{pmatrix}
	 	A\tv_1+B\tv_2\\ C\tv_2
	 \end{pmatrix}.
	\]
	 Since $M$ preserves $Q_m$, we have for all  $\tv\in Q_m$ that $(A\tv_1+B\tv_2)^T C\tv_2=0$. That means
	 \[
	 	0 = (A\tv_1)^TC\tv_2 + (B\tv_2)^TC\tv_2 = \tv_1^T A^T C\tv_2 + \tv_2^T B^T C\tv_2.
	 \]
	 If we consider the special case that $\tv_1=\tn$, then we obtain $\tv_2^T B^T C \tv_2= 0$, for all $\tv_2\in \F_2^m$. From this it follows that $S\coloneqq B^T C$ is a symmetric matrix with $0$-diagonal. Moreover, it follows that  $\tv_1^T A^T C\tv_2=0$, for all $\tv\in Q_m$. However, from this it follows that $A^T C=I$. Indeed, if we consider all vectors of the shape $\tv=(\te_i,\te_j)^T$ (here and below, by $\te_i$ we denote the vector whose $i$-th entry is equal to $1$ and whose remaining entries are equal to $0$; in each case the length of $\te_i$ will be clear from the context) for $i\neq j$, then we obtain that all the off-diagonal entries of $A^T C$ are equal to $0$. Since both, $A$ and $C$ are regular, the claim follows. Now we may conclude that $C= (A^T)^{-1}$ and $S= B^T(A^T)^{-1}= (A^{-1}B)^T = A^{-1}B$. It follows that $B=AS$. Thus we showed that every element of $H_m$ is of the desired shape. It is not hard to see that every matrix of this shape preserves $Q_m$ and $S_m$ setwise. 
\end{proof}
\begin{proposition}\label{notrankthree}
	For all $m\ge 4$ the graph $\Gamma^{(m)}$ is not $2$-homogeneous, and $\Gamma_2^{(m)}$ has an intransitive automorphism group. 
\end{proposition}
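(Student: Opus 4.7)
The strategy is to exploit that $G_m \le \Aut(\Gamma^{(m)})$. Combining Lemma~\ref{HM} with the transitivity of $\GL(m,2)$ on nonzero vectors (and a short direct check handling the two remaining cases), one finds that $H_m$ has exactly three orbits on $V \setminus \{\tn\}$: $S_m \setminus \{\tn\}$, $Q_m \setminus S_m$, and $\F_2^{2m} \setminus Q_m$. Together with translations, $G_m$ thus realises the four relations $\varrho_1, \varrho_2, \varrho_3, \varrho_4$ of Theorem~\ref{mainthm} as its orbitals on $V \times V$. Since the orbitals of $\Aut(\Gamma^{(m)})$ are unions of $G_m$-orbitals and adjacency-preservation forbids merging $\varrho_3$ with a non-edge orbital, non-$2$-homogeneity reduces to ruling out $\varrho_2 \cup \varrho_4$ being a single orbital. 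By the $(2,5)$-regularity of $\Gamma^{(m)}$ (\cite{Reich00}), no combinatorial invariant of order $\le 5$ can separate $\varrho_2$ from $\varrho_4$; I would therefore use a global invariant, namely ``lying in a common maximum independent set.''

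The Hoffman ratio bound $\alpha(\Gamma^{(m)}) \le v|s|/(k+|s|) = 2^m$ (using $s = -4\bb_m$ from the table) is attained by every coset of $S_m$: indeed $\tv - \tw \in S_m$ forces $\tv - \tw \notin Q_m \setminus S_m$ and hence $\tv \not\sim \tw$. The crucial step is the

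\smallskip
\emph{Key claim.} For $m \ge 4$, the only maximum independent sets of $\Gamma^{(m)}$ are the $2^m$ cosets of $S_m$.
\smallskip

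Granting the claim, every $\alpha \in \Aut(\Gamma^{(m)})$ permutes the cosets of $S_m$, so the equivalence relation ``same coset'' ($= \varrho_1 \cup \varrho_2$) is $\Aut$-invariant; thus $\varrho_2$ cannot merge with $\varrho_4$ and $\Aut(\Gamma^{(m)})$ has rank $\ge 4$. The same claim also yields the intransitivity of $\Aut(\Gamma_2^{(m)})$: any independent set $J$ of $\Gamma_2^{(m)}$ satisfies $|J| \le 2^m - 1$ (since $J \cup \{\tn\}$ is an independent set of $\Gamma^{(m)}$ of size $\le 2^m$ containing $\tn$), with equality forcing $J \cup \{\tn\} = S_m$ by the claim; hence $S_m \setminus \{\tn\}$ is the unique maximum independent set of $\Gamma_2^{(m)}$ and is fixed setwise by $\Aut(\Gamma_2^{(m)})$. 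For $m \ge 4$ it is a proper nonempty subset of $V(\Gamma_2^{(m)}) = (S_m \setminus \{\tn\}) \cup (\F_2^{2m} \setminus Q_m)$, giving intransitivity.

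The main obstacle is the key claim. My plan: take a maximum independent set $I$ with $\tn \in I$, write $I = \{\tn\} \cup I_S \cup I_N$ with $I_S \subseteq S_m \setminus \{\tn\}$ and $I_N \subseteq \F_2^{2m} \setminus Q_m$, and set $T := \langle I_S \rangle_{\F_2}$. The pairwise non-adjacency conditions force $I_N \subseteq T^\perp$ with respect to the polar form of $q$, and upon projecting to the quotient $T^\perp/T$---a hyperbolic quadratic space of dimension $2(m - \dim T)$ carrying $S_m/T$ as a maximal singular subspace---the image $\overline{I_N}$ becomes an independent set of the smaller graph $\Gamma^{(m - \dim T)}$ lying inside its non-quadric part. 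The restriction $m \ge 4$ is essential: for instance, for $m = 2$ the anisotropic $2$-plane $\langle (1,0,1,0),\,(0,1,1,1)\rangle_{\F_2}$ together with $\tn$ is a maximum independent set of size $4$ that is not a coset of $S_2$. A careful counting argument combining the Hoffman-tight regularity of maximum independent sets (every external vertex has exactly $|s| = 2^{m-1}$ neighbours inside) with the bounded dimension of anisotropic subspaces of a hyperbolic form over $\F_2$ should, for $m \ge 4$, force $I_N = \emptyset$ and hence $I = S_m$.
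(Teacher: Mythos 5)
Your reduction is set up sensibly (the orbitals of $G_m$ are indeed $\varrho_1^{(m)},\dots,\varrho_4^{(m)}$, and non-$2$-homogeneity is equivalent to $\varrho_2^{(m)}$ and $\varrho_4^{(m)}$ not fusing under $\Aut(\Gamma^{(m)})$), but the argument collapses at the Key Claim, which is false for every $m\ge 2$; the hypothesis $m\ge 4$ does not rescue it. Take any subspace $T\le S_m$ with $\dim T=m-2$. Since $T$ is totally singular, $S_m\subseteq T^\perp$ and the quotient $T^\perp/T$ carries an induced nondegenerate hyperbolic quadratic form in dimension $4$, so it contains a $2$-dimensional anisotropic subspace (your own $m=2$ example). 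Let $U\le T^\perp$ be the preimage of such a subspace, so $\dim U=m$. For $u\in U\setminus T$ and $t\in T$ one has $q^{(m)}(u+t)=q^{(m)}(u)+q^{(m)}(t)+[u,t]=q^{(m)}(u)=1$, because $t$ is singular and $u\in T^\perp$; hence every nonzero vector of $U$ lies either in $T\subseteq S_m$ or outside $Q_m$, and therefore $U$ is an independent set of size $2^m$ attaining the ratio bound, yet it is not a coset of $S_m$. Concretely, for $m=4$ take $U=\bigl\langle \icol{\te_1\\\te_1},\icol{\te_2\\\te_1+\te_2},\icol{\te_3\\\tn},\icol{\te_4\\\tn}\bigr\rangle$. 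Worse, this does not merely break your proof of the claim, it breaks the chosen invariant itself: $\tn$ and $\icol{\te_1\\\te_1}$ form a $\varrho_4^{(m)}$-pair lying in the common maximum independent set $U$, while any $\varrho_2^{(m)}$-pair through $\tn$ lies in $S_m$, so ``lying in a common maximum independent set'' does not separate $\varrho_2^{(m)}$ from $\varrho_4^{(m)}$. The same example also destroys your intransitivity argument for $\Gamma_2^{(m)}$: $U\setminus\{\tn\}$ is a second maximum independent set of $\Gamma_2^{(m)}$ besides $S_m\setminus\{\tn\}$, so uniqueness (and with it the invariance of $S_m\setminus\{\tn\}$) fails.

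A repair along your lines would need a strictly finer global invariant (for instance, the number of maximum cocliques through a pair), which amounts to classifying all Hoffman cocliques of $\Gamma^{(m)}$ --- not obviously easier than the problem itself. The paper instead distinguishes the two kinds of non-arcs locally: for a non-arc $(\tz,\ta)$ it forms the graph $\Upsilon_{\ta}$ induced on the neighbours of $\tz$ that are non-neighbours of $\ta$, identifies $\Upsilon_{\ta}$ for the $\varrho_2$-case with $\Gamma^{(m-1)}$ and the $\varrho_4$-case with a switched version of it, and separates them by the number of common neighbours of a triangle ($\bb_m(2\bb_m-1)$ versus $\bb_m(2\bb_m-3/2)$); the non-isomorphism of these two graphs then also yields the intransitivity of $\Aut(\Gamma_2^{(m)})$, since they occur as first subconstituents of $\Gamma_2^{(m)}$ at different vertices. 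You would need some such quantitative separation; as it stands, your proposal has a genuine gap.
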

\begin{proof}
	Consider the vertices $\tz=\icol{\tn\\\tn}$, $\ta=\icol{\te_1\\\tn}$, $\tb=\icol{\te_1\\\te_1}$. Clearly, $(\tz,\ta)$ and $(\tz,\tb)$ are non-arcs in $\Gamma^{(m)}$. Our goal is to show that no automorphism of $\Gamma^{(m)}$ maps $(\tz,\ta)$ to $(\tz,\tb)$: To this end we introduce two auxiliary graphs $\Upsilon_{\ta}$ and $\Upsilon_{\tb}$. The vertices of $\Upsilon_{\ta}$ and $\Upsilon_{\tb}$ shall be all neighbours of $\tz$ in $\Gamma^{(m)}$ that are non-neighbours of $\ta$ and $\tb$, respectively. If we can show that $\Upsilon_{\ta}$ and $\Upsilon_{\tb}$ are non-isomorphic, then we are done.
	
	It is not hard to see that we have
	\begin{align*}
		V(\Upsilon_{\ta}) &= \{\icol{\tv_1\\\tv_2}\mid \tv_1^T\tv_2=0, \tv_2(1)=1\},\\
		V(\Upsilon_{\tb}) &= W_1\dotcup W_2,\text{ where} \\	
		W_1 &= \{\icol{\tv_1\\\tv_2}\mid \tv_2=\te_1,\tv_1(1)=0\},\\
		W_2 &= 	\{\icol{\tv_1\\\tv_2}\mid \tv_2\neq\tn, \tv_1^T\tv_2=0,\tv_1(1)=\tv_2(1)\},
	\end{align*}
	where for a vector $\tv$, by $\tv(i)$ we denote the $i$-th entry of $\tv$.	
	In order to understand the structure of the graphs $\Upsilon_{\ta}$ and $\Upsilon_{\tb}$, consider the projection $\Pi\colon\F_2^{2m}\epito\F_2^{2m-2}$ given by
	\[
	\Pi\colon \F_2^{2m}\epito\F_2^{2m-2}: \icol{x_1\\\vdots\\x_m\\y_1\\\vdots\\y_m}\mapsto \icol{x_2\\\vdots\\x_m\\y_2\\\vdots\\y_m}.
	\]
	Note that the restrictions of $\Pi$ to $V(\Upsilon_{\ta})$ and to $V(\Upsilon_{\tb})$ both are bijections with $\F_2^{2m-2}$. Moreover, if we define
	\begin{align*}
		A_1 &\coloneqq \left\{\icol{0\\\tn\\1\\\tn}\right\}, & B_1 &\coloneqq \left\{\icol{0\\\tn\\1\\\tn}\right\},\\
		A_2 &\coloneqq \left\{\icol{0\\\tx\\1\\\tn}\mid \tx\in\F_2^{m-1}\setminus\{\tn\}\right\}, & B_2 &\coloneqq \left\{\icol{0\\\tx\\1\\\tn}\mid \tx\in\F_2^{m-1}\setminus\{\tn\}\right\},\\
		A_3 &\coloneqq  \left\{\icol{0\\\tx\\1\\\ty}\mid \tx,\ty\in\F_2^{m-1},\tx^T\ty=0,\ty\neq\tn\right\}, & B_3 &\coloneqq  \left\{\icol{0\\\tx\\0\\\ty}\mid \tx,\ty\in\F_2^{m-1},\tx^T\ty=0,\ty\neq\tn\right\},\\
		A_4 &\coloneqq  \left\{\icol{1\\\tx\\1\\\ty}\mid \tx,\ty\in\F_2^{m-1},\tx^T\ty=1\right\}, & B_4 &\coloneqq  \left\{\icol{1\\\tx\\1\\\ty}\mid \tx,\ty\in\F_2^{m-1},\tx^T\ty=1\right\}.
	\end{align*}
	Then $V(\Upsilon_{\ta})=A_1\dotcup A_2\dotcup A_3\dotcup A_4$, $V(\Upsilon_{\tb})=B_1\dotcup B_2\dotcup B_3\dotcup B_4$, and 
	\begin{align*}
		\Pi(A_1) &=\Pi(B_1) = \{\tn\}, & 
		\Pi(A_2) &=\Pi(B_2) = S_{m-1}\setminus\{\tn\},\\
		\Pi(A_3) &=\Pi(B_3) = Q_{m-1}\setminus S_{m-1}, & 
		\Pi(A_4) &=\Pi(B_4) = \F_2^{2m-2}\setminus Q_{m-1}.
	\end{align*}
	The edges of $\Upsilon_{\ta}$ and $\Upsilon_{\tb}$ may be read off the following diagrams:
	\[\scalebox{0.9}{
\begin{tikzpicture}[node distance=20mm, terminal/.style={rounded rectangle,minimum size=6mm,draw},tight/.style={inner sep=-2pt}]
\node (A3) [terminal,anchor=center] {$\Pi(A_3)=Q_{m-1}\setminus S_{m-1}$};
\node (A2) [terminal, above right= of A3,anchor=center] {$\Pi(A_2)=S_{m-1}\setminus \{\tn\}$};
\node (A4) [terminal, below right= of A3,anchor=center] {$\Pi(A_4)=\F_2^{2m-2}\setminus Q_{m-1}$};
\node (A1) [terminal, left=30mm of A3,anchor=center] {$\Pi(A_1)=\{\tn\}$};
\path[thick, draw]  (A1) edge ["$\varrho_3^{(m-1)}$" inner sep=2pt] (A3);
\path[thick, draw]  (A3) edge ["$\varrho_3^{(m-1)}$"  inner sep=0pt,swap](A4);
\path[thick, draw]  (A3) edge ["$\varrho_3^{(m-1)}$" tight](A2);
\path[thick, draw]  (A2) edge ["$\varrho_3^{(m-1)}$" inner sep=2pt] (A4);
\path[thick, draw] (A3) edge [loop above, min distance=20mm,in=65,out=115,"$\varrho_3^{(m-1)}$"] (A3);
\path[thick, draw] (A4) edge [loop below, min distance=20mm,in=-65,out=-115,swap,"$\varrho_3^{(m-1)}$"] (A4); 
\node at (-3,-2){\ensuremath{\Pi(\Upsilon_{\ta})}}; 
\end{tikzpicture}}	
\]
\[\scalebox{0.9}{
\begin{tikzpicture}[node distance=20mm, terminal/.style={rounded rectangle,minimum size=6mm,draw},tight/.style={inner sep=-2pt}]
\node (B3) [terminal,anchor=center] {$\Pi(B_3)=Q_{m-1}\setminus S_{m-1}$};
\node (B2) [terminal, above right= of B3,anchor=center] {$\Pi(B_2)=S_{m-1}\setminus \{\tn\}$};
\node (B4) [terminal, below right= of B3,anchor=center] {$\Pi(B_4)=\F_2^{2m-2}\setminus Q_{m-1}$};
\node (B1) [terminal, left=30mm of B3,anchor=center] {$\Pi(B_1)=\{\tn\}$};
\path[thick, draw]  (B1) edge ["$\varrho_3^{(m-1)}$" inner sep=2pt] (B3);
\path[thick, draw]  (B3) edge ["$\varrho_4^{(m-1)}$"  inner sep=0pt,swap](B4);
\path[thick, draw]  (B3) edge ["$\varrho_3^{(m-1)}$" tight](B2);
\path[thick, draw]  (B2) edge ["$\varrho_3^{(m-1)}$" inner sep=2pt] (B4);
\path[thick, draw] (B3) edge [loop above, min distance=20mm,in=65,out=115,"$\varrho_3^{(m-1)}$"] (B3);
\path[thick, draw] (B4) edge [loop below, min distance=20mm,in=-65,out=-115,swap,"$\varrho_3^{(m-1)}$"] (B4); 
\node at (-3,-2){\ensuremath{\Pi(\Upsilon_{\tb})}}; 
\end{tikzpicture}}
\]
If, e.g., in the first figure there is an edge between, say, $\Pi(A_i)$ and $\Pi(A_j)$ labelled with $\varrho_k^{(m-1)}$, then this means that $(\tu,\tv)\in E(\Upsilon_{\ta})$ 
if and only if $(\Pi(\tu),\Pi(\tv))\in\varrho_k^{(m-1)}$. Also we can read off the diagrams that $\Upsilon_{\ta}$ is isomorphic to $\Gamma^{(m-1)}$ and that a graph isomorphic to $\Upsilon_{\tb}$ can be obtained from $\Gamma^{(m-1)}$ by switching all edges between $Q_{m-1}\setminus S_{m-1}$ and $\F_2^{(2m-2)}\setminus Q_{m-1}$ for edges given by $\varrho_4^{(m-1)}$.

Now we are ready to show that $\Upsilon_{\tb}$ is not isomorphic to $\Upsilon_{\ta}$.  First of all we note that $\Upsilon_{\ta}$, being isomorphic to $\Gamma^{(m-1)}$, is $(3,4)$-regular. Let us count the number of common neighbours of a triangle in $\Gamma^{(m-1)}$. Clearly, this is the number of common neighbours of an edge in the first subconstituent $\Gamma_1^{(m-1)}$. In other words, it is equal to $\bb_m(2\bb_m-1)$.  Let us now consider the triangle of $\Upsilon_{\tb}$ induced by the following three vertices:
\begin{align*}
	\tc_1&=\icol{0\\\tn\\0\\\te_1}, & \tc_2&=\icol{0\\\tn\\0\\\te_2}, & \tc_3&=\icol{0\\\tn\\0\\\te_3}.
\end{align*}
Note that we are able to choose the vertices in this way, since $m\ge 4$, and that $\{\tc_1,\tc_2,\tc_3\}\subseteq B_3$. Thus, the image under $\Pi$ is in $Q_{m-1}\setminus S_{m-1}$. From the diagram of $\Pi(\Upsilon_{\tb})$  we may read that the number of joint neighbours of $\tc_1$, $\tc_2$, and $\tc_3$ is equal to 
\[ 
	 |\{\tv\in Q_{m-1}\mid \forall i:(\Pi(\tc_i),\tv)\in\varrho_3^{(m-1)}\}| + |\{\tv\in \F_2^{2m-2}\setminus Q_{m-1}\mid \forall i:(\Pi(\tc_i),\tv)\in\varrho_4^{(m-1)}\}|,
\]
which can be shown to be equal to $\bb_m(2\bb_m-3/2)$. This shows that $\Upsilon_{\tb}$ is not isomorphic to $\Upsilon_{\ta}$, for all $m\ge 4$. This completes the proof that $\Gamma^{(m)}$ is not $2$-homogeneous. 

It remains to show that $\Gamma_2^{(m)}$ has an intransitive automorphism group. For this we can make use of our computations above. First we use the fact  that  $\Gamma_2^{(m)}(\ta)$ is isomorphic to $\Gamma_2^{(m)}(\tb)$. Second we argue that $\tz$ is a vertex of both graphs. Third we note that $\Upsilon_{\ta}$ is the first subconstituent of $\Gamma_2^{(m)}(\ta)$ with respect to $\tz$ and that $\Upsilon_{\tb}$ is the first subconstituent of $\Gamma_2^{(m)}(\tb)$ with respect to $\tz$. Now, the fact that $\Upsilon_{\ta}$ and $\Upsilon_{\tb}$ are non-isomorphic shows that $\Gamma_2^{(m)}$ contains two different kinds of vertices. In other words, the automorphism group of $\Gamma_2^{(m)}$ has at least two orbits on vertices.
\end{proof}

\section{The Schurian closure of $\Gamma^{(m)}$}
We define the \emph{Schurian closure} of a graph $\Gamma$ to be the relational structure on $V(\Gamma)$ whose basic relations are the orbitals of $\Aut(\Gamma)$. The Schurian closure of a graph gives rise to a so-called coherent configuration. Recall that a \emph{coherent configuration} $\mathcal{C}$ is a finite relational structure $(V, (\varrho_i)_{i\in I})$, such that 
\begin{itemize}
	\item the set $\{\varrho_i\mid i\in I\}$ forms a partition of $V\times V$, 
	\item every $\varrho_i$ is either contained in the diagonal relation $\Delta_V=\{(x,x)\mid x\in V\}$, or it is irreflexive,
	\item every $\varrho_i$ is either symmetric or asymmetric,
	\item for all $i,j,k\in I$ there exists an integer $p_{i,j}^k$, such that for all $(x,y)\in\varrho_k$ we have
	\[
		|\{ z\mid (x,z)\in\varrho_i\land (z,y)\in\varrho_j\}|=p_{i,j}^k.
	\]
\end{itemize} 
The $(p_{i,j}^k)_{i,j,k\in I}$ are called the \emph{structure constants} of $\mathcal{C}$. A coherent configuration $\mathcal{C}$ is called \emph{Schurian} if its relations coincide with the orbitals of its automorphism group (here the automorphism group of $\mathcal{C}$ consists of all permutations of $V$ that preserve each relation $\varrho_i$ where $i\in I$). Note that this is the same as to say  that $\mathcal{C}$, considered as a relational structure, is $2$-homogeneous (i.e., every isomorphism between substructures of cardinality at most $2$ extends to an automorphism). 
If $\mathcal{C}=(V(\Gamma),(\varrho_i)_{i=1,\dots,k})$ is the Schurian closure of $\Gamma$, then it is not hard to see that $\mathcal{C}$ is a Schurian coherent configuration. 

The knowledge of the Schurian closure of $\Gamma^{(m)}$ and, in particular, the knowledge of its structure constants is going to be essential in proving the $(3,5)$-regularity of $\Gamma^{(m)}$.  Our considerations from the previous section suggest that  $\Aut(\Gamma^{(m)})$ has at least $4$ orbitals. In the following we show that the orbitals of $\Aut(\Gamma^{(m)})$ are exactly the relations $\varrho_1^{(m)},\dots,\varrho_4^{(m)}$ that were defined in Theorem~\ref{mainthm}:
\begin{proposition}\label{rankthree}
	The orbitals of $\Aut(\Gamma^{(m)})$ are given by the following binary relations on $\F_2^{2m}$:
	\begin{align*}
		\varrho_1^{(m)} &= \{(\tv,\tw)\mid \tv=\tw\}, & \varrho_2^{(m)} &= \{(\tv,\tw)\mid \tv+\tw\in S_m\setminus\{\tn\}\,\},\\
		\varrho_3^{(m)} &= \{ (\tv,\tw)\mid  \tv+\tw\in Q_m\setminus S_m\}, & \varrho_4^{(m)} &= \{(\tv,\tw)\mid \tv+\tw\notin Q_m\}.
	\end{align*}
\end{proposition}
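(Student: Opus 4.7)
My plan is to show first that the affine subgroup $G_m\le\Aut(\Gamma^{(m)})$ already has exactly the four relations $\varrho_1^{(m)},\ldots,\varrho_4^{(m)}$ as its orbitals, and then to deduce the full statement from Proposition~\ref{notrankthree}.

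Since $G_m$ contains all translations $\varphi_{I,\tw}$, it is transitive on vertices, so it suffices to check that the stabilizer $H_m$ of $\tn$ has exactly three orbits on $\F_2^{2m}\setminus\{\tn\}$, namely $S_m\setminus\{\tn\}$, $Q_m\setminus S_m$, and $\F_2^{2m}\setminus Q_m$. By Lemma~\ref{HM}, an element of $H_m$ sends $\icol{\tv_1\\\tv_2}$ to $\icol{A\tv_1+AS\tv_2\\(A^T)^{-1}\tv_2}$. For vectors of $S_m\setminus\{\tn\}$ (where $\tv_2=\tn$), the action reduces to $\tv_1\mapsto A\tv_1$, and the natural transitivity of $\GL(m,2)$ on nonzero vectors handles this. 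For the other two orbits I would proceed in two steps. First, with $S=O$ and $A$ chosen so that its first row equals $\tv_2^T$ (possible for any $\tv_2\neq\tn$, extending to an invertible $A$), the vector becomes $\icol{\tv_1'\\\te_1}$ with $\tv_1'(1)=\tv_1^T\tv_2$, which is $0$ or $1$ according as the orbit is $Q_m\setminus S_m$ or $\F_2^{2m}\setminus Q_m$. Second, with $A=I$ and $S$ the symmetric zero-diagonal matrix whose first row and first column both equal some prescribed $\tw$ with $\tw(1)=0$ (and zeros elsewhere), the vector is further sent to $\icol{\tv_1'+\tw\\\te_1}$. Taking $\tw=\tv_1'$ in the singular case yields the normal form $\icol{\tn\\\te_1}$, and $\tw=\tv_1'+\te_1$ in the non-singular case yields $\icol{\te_1\\\te_1}$; in both cases the zero-diagonal constraint $\tw(1)=0$ matches the value of $\tv_1'(1)$ forced by the quadratic form.

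Combining these three transitivity statements with the translation action, $G_m$ has exactly the four orbitals $\varrho_1^{(m)},\ldots,\varrho_4^{(m)}$. Since $G_m\le\Aut(\Gamma^{(m)})$, each orbital of $\Aut(\Gamma^{(m)})$ is a union of $\varrho_i^{(m)}$'s. The diagonal $\varrho_1^{(m)}$ cannot merge with any irreflexive relation; the edge relation $\varrho_3^{(m)}=E^{(m)}$ cannot merge with either non-edge relation $\varrho_2^{(m)}$ or $\varrho_4^{(m)}$ since automorphisms preserve adjacency. The only remaining possibility, $\varrho_2^{(m)}\cup\varrho_4^{(m)}$, would make $\Aut(\Gamma^{(m)})$ a rank-$3$ group and hence $\Gamma^{(m)}$ a $2$-homogeneous graph, which is excluded by Proposition~\ref{notrankthree}. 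Hence $\varrho_1^{(m)},\ldots,\varrho_4^{(m)}$ are precisely the orbitals. The main technical step is the two-step normalization, specifically verifying that the zero-diagonal constraint on $S$ is compatible with the value of $\tv_1'(1)$ in each case; once that compatibility is in hand, the rest of the argument is essentially formal.
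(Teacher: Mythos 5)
Your proof is correct, and its overall skeleton coincides with the paper's: reduce to the suborbits of the point stabilizer $H_m$, note that the orbitals of $\Aut(\Gamma^{(m)})$ must be unions of the $G_m$-orbitals, and invoke Proposition~\ref{notrankthree} to exclude the only admissible fusion $\varrho_2^{(m)}\cup\varrho_4^{(m)}$. Where you genuinely differ is in the proof of the suborbit statement (the paper's Lemma~\ref{suborbs}). The paper maps a vector of $Q_m\setminus S_m$ (or of $\F_2^{2m}\setminus Q_m$) directly to another such vector: it chooses $A$ with $A\tv_2=\tw_2$ and then solves $S\tv_2=A^T\tw_1-\tv_1$ for a symmetric zero-diagonal $S$, which requires the solvability criterion of Lemma~\ref{symcond}, proved there by a graph-parity argument. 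You instead normalize every such vector to a canonical representative, $\icol{\tn\\\te_1}$ or $\icol{\te_1\\\te_1}$, in two explicit steps: first $S=O$ with $A$ having first row $\tv_2^T$, which turns the lower block into $\te_1$ and stores $\tv_1^T\tv_2$ in the first coordinate of the upper block; then $A=I$ with the symmetric zero-diagonal matrix supported on the first row and column. This bypasses Lemma~\ref{symcond} entirely, and your check that the zero-diagonal constraint $\tw(1)=0$ is compatible with the value of $\tv_1'(1)$ dictated by the quadratic form is precisely the point that makes the construction legitimate. The trade-off: the paper's route yields a reusable necessary-and-sufficient criterion for $S\tu=\tv$, while yours is more elementary and self-contained, using only matrices of a very simple shape. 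Your concluding fusion argument (the diagonal cannot merge, adjacency separates $\varrho_3^{(m)}$ from the non-arc classes, and merging $\varrho_2^{(m)}$ with $\varrho_4^{(m)}$ would force rank $3$ and hence $2$-homogeneity, contradicting Proposition~\ref{notrankthree}) is a more explicit rendering of the same step the paper phrases as ``$\Aut(\Gamma^{(m)})$ has at least $4$ orbitals.''
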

Before we come to the proof of this Proposition, we need a few auxiliary results:
\begin{lemma}\label{symcond}
	Let $\tu,\tv\in\F_2^m$. Then a symmetric $m\times m$-matrix $S$ over $\F_2$ with zero-diagonal exists such that  $S\tu=\tv$ if and only if  either $\tu=\tv=\tn$ or $\tu\neq\tn$ and $\tu^T\tv=0$. 
\end{lemma}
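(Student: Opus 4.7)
The plan is to treat the two directions of the biconditional separately; necessity is essentially a one-line calculation, while sufficiency is where the substance lies.

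For necessity, if $\tu=\tn$ then $\tv=S\tu=\tn$ immediately, and if $\tu\neq\tn$ I would observe $\tu^T\tv=\tu^T S\tu$ and use that over $\F_2$ every symmetric matrix with zero diagonal satisfies $x^T Sx=0$ identically: the diagonal contribution vanishes because $s_{ii}=0$, while the off-diagonal contribution $\sum_{i<j}(s_{ij}+s_{ji})u_iu_j$ vanishes because $s_{ij}+s_{ji}=2s_{ij}=0$ in $\F_2$. Hence $\tu^T\tv=0$.

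For sufficiency, the case $\tu=\tv=\tn$ is witnessed by $S=O$, so the real content is to show that when $\tu\neq\tn$ the linear map $\Phi\colon\mathcal{A}\to\F_2^m$ defined by $\Phi(S)\coloneqq S\tu$ is surjective onto $\tu^\perp\coloneqq\{\tv\mid \tu^T\tv=0\}$, where $\mathcal{A}$ denotes the $\F_2$-vector space of symmetric zero-diagonal $m\times m$-matrices. My plan is a dimension count: one has $\dim_{\F_2}\mathcal{A}=\binom{m}{2}$, the necessity direction already gives $\operatorname{im}\Phi\subseteq\tu^\perp$ (which has dimension $m-1$), and the kernel $\{S\in\mathcal{A}\mid S\tu=\tn\}$ consists of those symmetric zero-diagonal matrices for which $\tu$ lies in the radical of the associated bilinear form. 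Completing $\tu$ to an $\F_2$-basis of $\F_2^m$ and changing basis accordingly, such matrices correspond bijectively to arbitrary symmetric zero-diagonal $(m-1)\times(m-1)$-matrices, giving $\dim\ker\Phi=\binom{m-1}{2}$. Rank-nullity then yields $\dim\operatorname{im}\Phi=\binom{m}{2}-\binom{m-1}{2}=m-1$, forcing $\operatorname{im}\Phi=\tu^\perp$.

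The only subtle point in this plan is verifying that the property ``symmetric with zero diagonal'' is preserved under change of basis, so that the kernel really is a copy of $\mathcal{A}_{m-1}$; in characteristic $2$ this is the observation that symmetric zero-diagonal matrices correspond precisely to alternating bilinear forms, a basis-independent notion. If a more hands-on witness is preferred, one could instead exhibit $S$ explicitly: when $\tu^T\tu=1$ the matrix $S=\tu\tv^T+\tv\tu^T$ already does the job (its diagonal vanishes over $\F_2$ and $S\tu=(\tu^T\tv)\tu+(\tu^T\tu)\tv=\tv$), and when $\tu$ has even weight one must add a further alternating rank-two summand supported on a coordinate where $\tu$ is $1$ in order to compensate for the fact that $\tu^T\tu=0$. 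That even-weight case-split is the only place I would anticipate minor fiddling, which is why I favour the dimension-count route.
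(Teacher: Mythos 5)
Your proof is correct, and it takes a genuinely different route from the paper. The paper reads $S$ as the adjacency matrix of a graph on $\{1,\dots,m\}$, translates $S\tu=\tv$ into parity conditions on neighbourhoods, proves necessity by a handshake-lemma counting argument on the sets $I_{11},I_{10}$, and proves sufficiency by exhibiting an explicit graph (complete on $I_{11}$, one pendant edge from each vertex of $I_{01}$ into $I_1$, nothing else). You instead work linear-algebraically: necessity follows from the identity $x^TSx=0$ valid for every symmetric zero-diagonal matrix over $\F_2$ (cleaner than the paper's double count), and sufficiency from rank--nullity applied to $\Phi\colon S\mapsto S\tu$ on the space $\mathcal{A}$ of alternating matrices, with the kernel identified (via the basis-independence of ``alternating'' and the fact that $S\tu=\tn$ means $\tu$ lies in the radical of the form) with the alternating matrices of size $m-1$, so that $\dim\operatorname{im}\Phi=\binom{m}{2}-\binom{m-1}{2}=m-1$ forces $\operatorname{im}\Phi=\tu^{\perp}$. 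Your argument is shorter and more conceptual but non-constructive, whereas the paper's produces an explicit witness; your sketched explicit alternative ($S=\tu\tv^T+\tv\tu^T$ when $\tu^T\tu=1$, with an extra alternating rank-two correction in the even-weight case) is left unfinished, but since you rest the proof on the dimension count this does not create a gap.
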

\begin{proof}
	A symmetric $m\times m$-matrix $S$ with zero-diagonal may be considered as the adjacency matrix of a simple graph $\Gamma$ with vertex set $\{1,\dots,m\}$ and with $i$ connected to $j$ if and only if $S(i,j)=1$.
	Let 
	\begin{align*}
		I_0& \coloneqq\{i\in\{1,\dots,m\}\mid  \tu(i)=0\}, & I_1&\coloneqq \{i\in\{1,\dots,m\}\mid  \tu(i)=1\},\\
		J_0& \coloneqq \{j\in\{1,\dots,m\}\mid  \tv(j)=0\}, & J_1&\coloneqq \{j\in\{1,\dots,m\}\mid  \tv(j)=1\}.				
	\end{align*}
	Then $S\tu=\tv$ if and only if in $\Gamma$  every element of $J_1$ has an odd number of neighbours and every element of $J_0$ has an even number of neighbours in $I_1$, respectively. 
	More detailedly, if we define  
	\begin{align*}
		I_{00} &\coloneqq I_0\cap J_0, & I_{01} &\coloneqq I_0\cap J_1,& I_{10} &\coloneqq  I_1\cap J_0 & I_{11} &\coloneqq  I_1\cap J_1,
	\end{align*} 
	then $S\tu=\tv$ if and only if the  parity of the valencies of the vertices from the $I_{ij}$ to $I_1$ is as depicted in the following diagram:
\begin{equation}\label{paritydiag}
	\begin{tikzcd}[column sep=small]   
		I_{00} \arrow[to=I1,"\text{even}" sloped,swap] & & I_{01}\arrow[to=I1,"\text{odd}" sloped,swap] & & I_{10}\arrow[to=I1,"\text{even}" sloped,swap,near start] & &I_{11}\arrow[to=I1,"\text{odd}" sloped,swap]\\[2ex]
		& & & |[alias=I1]|I_1.
	\end{tikzcd}
\end{equation}
	
	``$\Rightarrow$'' 
	We need to show that $\tu^T\tv=0$. This means that $|I_{11}|$ is even. Suppose on the contrary that $|I_{11}|$ is odd. Let us count the number of arcs in the subgraph of $\Gamma$ induced by $I_1$. Since $|I_{11}|$ is odd, there is an odd number of arcs from $I_{11}$ to $I_{1}$. As the number of arcs from $I_{11}$ to $I_{11}$ must be even (by the first theorem of graph theory), the number of arcs from $I_{11}$ to $I_{10}$ must be odd. By symmetry, there is an odd number of arcs from $I_{10}$ to $I_{11}$. As the number of arc from $I_{10}$ to $I_1$ must be even, we conclude that the number of arcs from $I_{10}$ to $I_{10}$ must be odd, a contradiction with the first theorem of graph theory. Hence, the cardinality of $I_{11}$ must be even and thus $\tu^T\tv=0$. 

``$\Leftarrow$'' If $\tu=\tv=\tn$, then we may chose $S=O$. So suppose that $\tu\neq\tn$ and that $\tu^T\tv=0$.  Then $|I_{11}|$ is even. We define a graph $\Gamma$ with vertex set $\{1,\dots,m\}$: The subgraph of $\Gamma$ induced by $I_{11}$ shall be a complete graph. The induced subgraphs $\Gamma(I_{10})$, $\Gamma(I_{01})$, and $\Gamma(I_{00})$ shall have no edge at all. Finally, every vertex from $I_{01}$ shall be connected with exactly one vertex from $I_1$. Clearly, the valencies of the vertices of $\Gamma$ satisfy the parity-conditions from diagram~\eqref{paritydiag}. Thus, if we let $S$ be the adjacency matrix of $\Gamma$, then $S\tu=\tv$.    
\end{proof}
\begin{lemma}\label{Scond}
	Let $A\in\GL(m,2)$, let $S$ be any square matrix of order $m$, and let $\tu,\tv\in\F_2^{2m}$. Then 
	\[
	\begin{pmatrix}
		(A^T)^{-1} & (A^T)^{-1}S\\
		O & A
	\end{pmatrix}
	\begin{pmatrix}
		\tu_1\\
		\tu_2
	\end{pmatrix}=
	\begin{pmatrix}
		\tv_1\\
		\tv_2
	\end{pmatrix} \iff A\tu_2=\tv_2 \text{ and } \tu_1+S\tu_2 = A^T\tv_1.
	\]
\end{lemma}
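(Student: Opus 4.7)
The plan is to reduce the claim to a direct block-matrix computation followed by a one-line manipulation using the invertibility of $A$.

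First I would multiply out the left-hand side block by block, observing that
\[
\begin{pmatrix}
(A^T)^{-1} & (A^T)^{-1}S\\
O & A
\end{pmatrix}
\begin{pmatrix}
\tu_1\\ \tu_2
\end{pmatrix}
=
\begin{pmatrix}
(A^T)^{-1}\tu_1 + (A^T)^{-1}S\tu_2\\
A\tu_2
\end{pmatrix}
=
\begin{pmatrix}
(A^T)^{-1}(\tu_1+S\tu_2)\\
A\tu_2
\end{pmatrix}.
\]
Setting this equal to $\icol{\tv_1\\\tv_2}$ then splits into two independent conditions, one per block.

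Next I would read off the bottom block directly: it gives $A\tu_2=\tv_2$, which is already one of the two equations on the right-hand side of the lemma. For the top block, the equation is $(A^T)^{-1}(\tu_1+S\tu_2)=\tv_1$. Since $A\in\GL(m,2)$, the matrix $A^T$ is invertible, so multiplying both sides on the left by $A^T$ yields the equivalent equation $\tu_1+S\tu_2=A^T\tv_1$. Both steps are equivalences, so the conjunction of these two conditions is equivalent to the original matrix equation.

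There is no real obstacle here; the statement is purely a rewriting lemma that isolates the action of the element of $H_m$ from Lemma~\ref{HM} (after swapping $A$ for $(A^T)^{-1}$ in the parametrization) on a vector in a form that separates the ``top'' and ``bottom'' components. Its value lies entirely in being the form that will be convenient for the subsequent orbit computations, not in its proof.
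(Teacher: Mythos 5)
Your computation is correct and is exactly what the paper intends by its one-word proof (``Clear.''): expand the block product, read off $A\tu_2=\tv_2$ from the bottom block, and use invertibility of $A^T$ to rewrite the top block as $\tu_1+S\tu_2=A^T\tv_1$. Nothing further is needed.
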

\begin{proof}
	Clear.
\end{proof}
\begin{lemma}\label{suborbs}
	The group $H_m$ has orbits $\{\tn\}$, $S_m\setminus\{\tn\}$, $Q_m\setminus S_m$, and $\F_2^{2m}\setminus Q_m$.
\end{lemma}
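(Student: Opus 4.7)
The plan is to use the concrete parametrization of $H_m$ provided by Lemma~\ref{HM} together with the equivalent action-form of Lemma~\ref{Scond}, and then reduce the transitivity claim on each set to the solvability question answered by Lemma~\ref{symcond}.

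First I would note that the four listed sets are stable under $H_m$: they are precisely the level sets of the invariants ``is the vector zero?'', ``does it lie in $S_m$?'', and ``does it lie in $Q_m$?'', and $H_m$ preserves $Q_m$ and $S_m$ setwise (and fixes $\tn$) by its very definition. Hence each of the four sets is a union of $H_m$-orbits, and it suffices to verify that $H_m$ acts transitively on each of the three nontrivial ones.

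Combining Lemma~\ref{HM} with Lemma~\ref{Scond} (after the harmless reparametrization $A \leftrightarrow (A^T)^{-1}$), an element of $H_m$ sends $\tu = \icol{\tu_1\\\tu_2}$ to $\tv = \icol{\tv_1\\\tv_2}$ if and only if there exist $A \in \GL(m,2)$ and a symmetric $S$ with zero diagonal such that
\[
A\tu_2 = \tv_2 \qquad \text{and} \qquad S\tu_2 = \tu_1 + A^T\tv_1.
\]
For the orbit $S_m\setminus\{\tn\}$ both $\tu_2$ and $\tv_2$ are zero, so the first equation is trivial, and the second becomes $\tu_1 = A^T\tv_1$, solvable with $S=O$ because $\GL(m,2)$ acts transitively on $\F_2^m\setminus\{\tn\}$. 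For the remaining two orbits we have $\tu_2,\tv_2\neq\tn$, so we may first choose any $A\in\GL(m,2)$ with $A\tu_2=\tv_2$. It then remains to solve $S\tu_2 = \tu_1 + A^T\tv_1$ for a symmetric zero-diagonal $S$, and by Lemma~\ref{symcond} this is possible exactly when
\[
\tu_2^T\bigl(\tu_1 + A^T\tv_1\bigr) \;=\; \tu_2^T\tu_1 + (A\tu_2)^T\tv_1 \;=\; \tu_2^T\tu_1 + \tv_2^T\tv_1 \;=\; 0.
\]
On $Q_m\setminus S_m$ both summands vanish; on $\F_2^{2m}\setminus Q_m$ both summands equal $1$ and hence still sum to $0$ in $\F_2$. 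Thus the required $S$ exists in both cases, giving transitivity.

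There is no real obstacle here: the whole argument is a bookkeeping exercise once Lemma~\ref{symcond} is in place, since that lemma converts the transitivity question into an orthogonality check that the very definitions of the sets $Q_m$ and $\F_2^{2m}\setminus Q_m$ supply for free. The only step worth being careful about is the reparametrization between the forms of $H_m$ used in Lemmas~\ref{HM} and~\ref{Scond}, and the observation that when $\tu_2\neq\tn$ the preliminary choice of $A$ leaves enough freedom (in $S$) to hit any admissible $\tv_1$.
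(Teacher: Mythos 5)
Your proof is correct and follows essentially the same route as the paper: reduce to transitivity on each set, pick $A$ with $A\tu_2=\tv_2$, and use Lemma~\ref{symcond} (via Lemma~\ref{Scond} and the parametrization of $H_m$ from Lemma~\ref{HM}) to solve for the symmetric zero-diagonal $S$, with the orthogonality condition $\tu_2^T\tu_1+\tv_2^T\tv_1=0$ holding in both nontrivial cases. The only cosmetic difference is that you verify the condition via $(A\tu_2)^T\tv_1=\tv_2^T\tv_1$ directly, whereas the paper substitutes $\tv_2=A^{-1}\tw_2$; these are the same computation.
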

\begin{proof}
	Let $\tv,\tw\in S_m\setminus\{\tn\}$. Then $\tv=\icol{\tv_1\\\tn}$ and $\tw=\icol{\tw_1\\\tn}$. Let $A\in\GL(m,2)$, such that $A\tv_1=\tw_1$. Then
	\[
	\begin{pmatrix}
		A & O \\
		O & (A^T)^{-1}
	\end{pmatrix}
	\begin{pmatrix} 
	\tv_1\\\tn
	\end{pmatrix}	
= \begin{pmatrix}
	\tw_1\\\tn 	
  \end{pmatrix}.
	\] 
	Thus, $\tv$ and $\tw$ are in the same orbit under $H_m$.
	
	Let $\tv, \tw\in Q_m\setminus S_m$. That is $\tv=\icol{\tv_1\\\tv_2}$, $\tv_2\neq\tn$, $\tv_1^T\tv_2=0$, and $\tw=\icol{\tw_1\\\tw_2}$, $\tw_2\neq\tn$, $\tw_1^T\tw_2=0$. Let $A\in\GL(m,2)$, such that $A\tv_2=\tw_2$. Consider $\ta\coloneqq  A^T\tw_1-\tv_1$. We claim that there is a symmetric $m\times m$-matrix $S$ with zero-diagonal, such that $S\tv_2=\ta$. 
	By Lemma~\ref{symcond} we need to show that $\tv_2^T\ta=0$. We compute:
	\begin{equation}\label{v2a}
		\tv_2^TA^T\tw_1 = (A^{-1}\tw_2)^TA^T\tw_1 = \tw_2^T(A^T)^{-1}A^T\tw_1 = \tw_2^T\tw_1=0.
	\end{equation}
	Together with the fact that $\tv_2^T\tv_1=0$, this proves that $\tv_2^T\ta=0$. Let $S$ be a symmetric matrix with zero-diagonal, such that $S\tv_2=\ta$. Then, by Lemma~\ref{Scond}, we have that 
	\[
	\begin{pmatrix}
		(A^T)^{-1} & (A^T)^{-1}S\\
		O & A
	\end{pmatrix}
	\begin{pmatrix}
		\tv_1\\
		\tv_2
	\end{pmatrix}= 
	\begin{pmatrix}
		\tw_1\\
		\tw_2	
	\end{pmatrix}. 
	\]
	
	The case $\tv,\tw\in\F_2^{2m}\setminus Q_m$ is handled in the same way as the previous case. Only the final result in \eqref{v2a} is $1$ and $\tv_2^T\tv_1=1$, thus also in this case $\tv_2^T\ta=0$.
\end{proof}
\begin{proof}[Proof of Proposition~\ref{rankthree}]
	The group $G_m$  acts transitively on $V(\Gamma^{(m)})$. Thus, by Lemma~\ref{suborbs}, $\varrho_1^{(m)},\dots,\varrho_4^{(m)}$ are the orbitals of $G_m$. Since $G_m\le\Aut(\Gamma^{(m)})$, and since by Proposition~\ref{notrankthree} $\Aut(\Gamma^{(m)})$ has at least $4$ orbitals, we conclude that the $\varrho_i^{(m)}$ ($i=1,\dots,4$) are precisely the orbitals of $\Aut(\Gamma^{(m)})$. 	
\end{proof}

Let us denote the Schurian closure $(\F_2^{2m};\,\varrho_1^{(m)},\varrho_2^{(m)},\varrho_3^{(m)},\varrho_4^{(m)})$ of $\Gamma^{(m)}$ by $\mathcal{C}^{(m)}$.
This coherent configuration appeared for the first time in \cite{Iva94}, where also its structure constants $(p_{i,j}^k(m))_{i,j,k\in\{1,2,3,4\}}$ were computed. Here  we give this table once more, using our notations: \\
\begin{minipage}{\linewidth-12pt}
\begin{center}\scalebox{1}{$\displaystyle\renewcommand{\arraystretch}{1.3}
\begin{array}{cccccc} 
	& & j=1 & j=2 & j=3 & j=4\\\hline
	& k=1 & 1 & 0 & 0 & 0\\
	& k=2 & 0 & 1 & 0 & 0\\
\raisebox{3ex}[0pt][0pt]{i=1}	& k=3 & 0 & 0 & 1 & 0\\
	& k=4 & 0 & 0 & 0 & 1\\\hline
	& k=1 & 0 & 8\bb_m-1  & 0 & 0\\
	& k=2 & 1 & 8\bb_m-2 & 0 & 0\\
\raisebox{3ex}[0pt][0pt]{i=2}	& k=3 & 0 & 0 & 4\bb_m-1 & 4\bb_m\\
	& k=4 & 0 & 0 & 4\bb_m & 4\bb_m-1\\\hline
	& k=1 & 0 & 0  & 4\bb_m(8\bb_m-1) & 0\\
	& k=2 & 0 & 0 & 4\bb_m(4\bb_m-1) & 16\bb_m^2\\
\raisebox{3ex}[0pt][0pt]{i=3}	& k=3 & 1 & 4\bb_m-1 & 4\bb_m(4\bb_m-1) & 4\bb_m(4\bb_m-1)\\
	& k=4 & 0 & 4\bb_m & 4\bb_m(4\bb_m-1) & 4\bb_m(4\bb_m-1)\\\hline
	& k=1 & 0 & 0  & 0 & 4\bb_m(8\bb_m-1)\\
	& k=2 & 0 & 0 & 16\bb_m^2 & 4\bb_m(4\bb_m-1)\\
\raisebox{3ex}[0pt][0pt]{i=4}	& k=3 & 0 & 4\bb_m & 4\bb_m(4\bb_m-1) & 4\bb_m(4\bb_m-1)\\
	& k=4 & 1 & 4\bb_m-1 & 4\bb_m(4\bb_m-1) & 4\bb_m(4\bb_m-1)\\\hline
\end{array}
$}\end{center}
\captionof{table}{The structure constants $p_{ij}^k(m)$ of $\mathcal{C}^{(m)}$}
\end{minipage}

Next we show that the coherent configuration $\mathcal{C}^{(m)}$, considered merely as a relational structure in the model-theoretic sense, has another remarkable property:
\begin{proposition}
	$\mathcal{C}^{(m)}$, considered as relational structure, is $3$-homogeneous. That is, every isomorphism between substructures of cardinality at most  $3$ extends to an automorphism of $\mathcal{C}^{(m)}$. 
\end{proposition}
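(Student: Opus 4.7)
My plan is to show that the subgroup $G_m \le \Aut(\mathcal{C}^{(m)})$ already acts $3$-homogeneously; since $\mathcal{C}^{(m)}$ is the Schurian closure of $\Gamma^{(m)}$ it is automatically $2$-homogeneous, so only $3$-element substructures remain to check. The translation subgroup $T_m = \{\varphi_{I,\tw} : \tw\in\F_2^{2m}\} \le G_m$ is regular on vertices and preserves every $\varrho_i^{(m)}$, so any partial isomorphism between $3$-element substructures can be translated on both sides to become one that fixes $\tn$. Hence it suffices to show that $H_m$ acts transitively on each set
\[
\Omega_{i,j,k} \coloneqq \{(\tu,\tv) \in (\F_2^{2m}\setminus\{\tn\})^2 : \tu\ne\tv,\ c(\tu)=i,\ c(\tv)=j,\ c(\tu+\tv)=k\},
\]
where $c(\tv)\in\{1,2,3,4\}$ is the index of the $H_m$-orbit (Lemma \ref{suborbs}) containing $\tv$.

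\emph{Setting up the stabilizers.} By Lemma \ref{suborbs} we may fix canonical representatives $\tu^{(2)}=\icol{\te_1\\\tn}$, $\tu^{(3)}=\icol{\tn\\\te_1}$, $\tu^{(4)}=\icol{\te_1\\\te_1}$ of the three nontrivial $H_m$-orbits, so the task becomes: show $H_m^{\tu^{(i)}}$ is transitive on $\{\tv : c(\tv)=j,\ c(\tu^{(i)}+\tv)=k\}$ for each realized $(i,j,k)$. By Lemma \ref{HM}, each element of $H_m$ factors as
\[
\icol{A & AS \\ O & (A^T)^{-1}} = \icol{A & O \\ O & (A^T)^{-1}}\cdot\icol{I & S \\ O & I},
\]
so its action on $\tv$ decomposes into a linear part $\tv\mapsto\icol{A\tv_1\\(A^T)^{-1}\tv_2}$ and a shear part $\tv\mapsto\icol{\tv_1+S\tv_2\\\tv_2}$. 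The stabilizer conditions read: $\tu^{(2)}$ requires $A\te_1=\te_1$ (with $S$ unconstrained); $\tu^{(3)}$ requires $A^T\te_1=\te_1$ together with $S\te_1=\tn$; $\tu^{(4)}$ couples both kinds of constraint.

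\emph{Transitivity within each type.} Each verification follows a two-step recipe. First, use the shear part of $H_m^{\tu^{(i)}}$ to adjust $\tv_1$: by Lemma \ref{symcond}, any target $\tv_1'$ with the same parity $\tv_2^T\tv_1\pmod 2$ is reachable, provided the symmetric zero-diagonal matrix $S$ satisfies the stabilizer constraint (which it does, up to restricting to a fixed first row). Second, apply the linear part to move $\tv_2$ to the desired target, exploiting that the admissible matrices $A$ contain a copy of $\GL(m-1,2)$ acting on ``tail'' coordinates of $\tv_1$ and $\tv_2$; since $\GL(m-1,2)$ is transitive on non-zero vectors, the required targets are attainable.

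\emph{Main obstacle.} The principal difficulty is purely combinatorial bookkeeping. There are up to $3^3=27$ colored types $(i,j,k)$, and each non-vacuous type requires a separate check; the structure-constant table of $\mathcal{C}^{(m)}$ shown above both predicts which types are realized and rules out vacuous ones such as $\Omega_{2,2,k}$ for $k\ne 2$ (empty because $S_m$ is a subspace). The cases $i=4$ are the most delicate: there the stabilizer condition interlocks $A$ and $S$, so Lemma \ref{symcond} must be invoked with the correct parity to guarantee that the shear step lands in the admissible target set of $\tv_1$'s before the linear step is used to reposition $\tv_2$.
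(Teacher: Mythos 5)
Your overall strategy is the same as the paper's: after translating so that the base point is $\tn$ and using $2$-homogeneity of the Schurian closure, everything reduces to showing that the stabilizer in $G_m$ of $\tn$ and of a canonical representative of each nontrivial $H_m$-orbit acts transitively on the third points of each realized colour pattern, with the elements of $H_m$ taken in the explicit form of Lemma~\ref{HM}. Your reduction to the sets $\Omega_{i,j,k}$, your reading of the stabilizer conditions, and your use of the structure constants to discard vacuous types are all correct.

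The gap is that the part which carries the actual mathematical weight is only asserted, and the ``two-step recipe'' does not work in the decoupled form you state it. First, the shear and the linear part do not act independently: the linear part moves the first component as well, so you cannot ``first adjust $\tv_1$ by a shear, then move $\tv_2$ by $A$''; you must solve the combined condition of Lemma~\ref{Scond} (in the paper's parametrization, $A\tv_2=\tw_2$ together with $\tv_1+S\tv_2=A^{T}\tw_1$), and Lemma~\ref{symcond} then demands the parity identity $\tv_2^{T}(A^{T}\tw_1-\tv_1)=0$, which has to be verified from the colour constraints in every realized type (this is the analogue of computation \eqref{v2a}). Second, Lemma~\ref{symcond} does not provide a symmetric zero-diagonal $S$ satisfying in addition the stabilizer constraint ($S\te_1=\tn$ for the representative $\icol{\tn\\\te_1}$, and a constraint coupled with $A$ for $\icol{\te_1\\\te_1}$); the paper obtains such matrices by block constructions that reduce to $H_{m-1}$ acting on $\F_2^{2m-2}$, and in one case (the all-$\varrho_4$ triangle) the required $S$ even has a nonzero first row, so ``restricting to a fixed first row'' is not a harmless remark but a point where the construction must be redone. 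Third, transitivity of $\GL(m-1,2)$ on nonzero vectors is not enough for your second step: one has to match the tails of \emph{both} components simultaneously, which is precisely the orbit statement of Lemma~\ref{suborbs} one dimension down (transitivity of $H_{m-1}$ on $S_{m-1}\setminus\{\tn\}$, $Q_{m-1}\setminus S_{m-1}$, and $\F_2^{2m-2}\setminus Q_{m-1}$), a tool your sketch never invokes. What remains to be supplied is therefore essentially the entire case analysis that constitutes the paper's proof (its eight types with explicit choices of $A$, $\tilde{x}$, and $S$), so as it stands your text is a correct plan rather than a proof.
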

\begin{proof}
	We already know that $\mathcal{C}^{(m)}$ is $1$-homogeneous and $2$-homogeneous. Following is a list of isomorphism types of substructures on $3$ elements in $\mathcal{C}^{(m)}$:
	\[
	\begin{matrix}
	\begin{tikzpicture}[baseline={(current bounding box.center)}]
		\tikzset{VertexStyle/.style = {shape = circle, draw,minimum size = 2pt, inner sep = 2pt}}        
		\SetVertexLabelOut
	    \SetVertexMath
	    \SetVertexLabel
   		\Vertex[a=0*120-90,d=1,NoLabel]{a}
   		\Vertex[a=1*120-90,d=1,NoLabel]{b}
   		\Vertex[a=2*120-90,d=1,NoLabel]{c}
		\Edge[labelstyle={MyLabel}, label=$\varrho_2$](a)(b) 
		\Edge[labelstyle={MyLabel}, label={$\varrho_2$}](b)(c) 
		\Edge[labelstyle={MyLabel,swap}, label={$\varrho_2$}](a)(c)  
	   \end{tikzpicture} & 
	\begin{tikzpicture}[baseline={(current bounding box.center)}]
		\tikzset{VertexStyle/.style = {shape = circle, draw,minimum size = 2pt, inner sep = 2pt}}        
		\SetVertexLabelOut
	    \SetVertexMath
	    \SetVertexLabel
   		\Vertex[a=0*120-90,d=1,NoLabel]{a}
   		\Vertex[a=1*120-90,d=1,NoLabel]{b}
   		\Vertex[a=2*120-90,d=1,NoLabel]{c}
		\Edge[labelstyle={MyLabel}, label=$\varrho_3$](a)(b) 
		\Edge[labelstyle={MyLabel}, label={$\varrho_3$}](b)(c) 
		\Edge[labelstyle={MyLabel,swap}, label={$\varrho_3$}](a)(c)  
	   \end{tikzpicture} & 
	\begin{tikzpicture}[baseline={(current bounding box.center)}]
		\tikzset{VertexStyle/.style = {shape = circle, draw,minimum size = 2pt, inner sep = 2pt}}        
		\SetVertexLabelOut
	    \SetVertexMath
	    \SetVertexLabel
   		\Vertex[a=0*120-90,d=1,NoLabel]{a}
   		\Vertex[a=1*120-90,d=1,NoLabel]{b}
   		\Vertex[a=2*120-90,d=1,NoLabel]{c}
		\Edge[labelstyle={MyLabel}, label=$\varrho_4$](a)(b) 
		\Edge[labelstyle={MyLabel}, label={$\varrho_4$}](b)(c) 
		\Edge[labelstyle={MyLabel,swap}, label={$\varrho_4$}](a)(c)  
	   \end{tikzpicture} & 
	\begin{tikzpicture}[baseline={(current bounding box.center)}]
		\tikzset{VertexStyle/.style = {shape = circle, draw,minimum size = 2pt, inner sep = 2pt}}        
		\SetVertexLabelOut
	    \SetVertexMath
	    \SetVertexLabel
   		\Vertex[a=0*120-90,d=1,NoLabel]{a}
   		\Vertex[a=1*120-90,d=1,NoLabel]{b}
   		\Vertex[a=2*120-90,d=1,NoLabel]{c}
		\Edge[labelstyle={MyLabel}, label=$\varrho_3$](a)(b) 
		\Edge[labelstyle={MyLabel}, label={$\varrho_3$}](b)(c) 
		\Edge[labelstyle={MyLabel,swap}, label={$\varrho_2$}](a)(c)  
	   \end{tikzpicture}\\[6.5ex]
	   \mathcal{T}_1 & \mathcal{T}_2 & \mathcal{T}_3 & \mathcal{T}_4\\[2ex]
	\begin{tikzpicture}[baseline={(current bounding box.center)}]
		\tikzset{VertexStyle/.style = {shape = circle, draw,minimum size = 2pt, inner sep = 2pt}}        
		\SetVertexLabelOut
	    \SetVertexMath
	    \SetVertexLabel
   		\Vertex[a=0*120-90,d=1,NoLabel]{a}
   		\Vertex[a=1*120-90,d=1,NoLabel]{b}
   		\Vertex[a=2*120-90,d=1,NoLabel]{c}
		\Edge[labelstyle={MyLabel}, label=$\varrho_4$](a)(b) 
		\Edge[labelstyle={MyLabel}, label={$\varrho_4$}](b)(c) 
		\Edge[labelstyle={MyLabel,swap}, label={$\varrho_2$}](a)(c)  
	   \end{tikzpicture} &	
	\begin{tikzpicture}[baseline={(current bounding box.center)}]
		\tikzset{VertexStyle/.style = {shape = circle, draw,minimum size = 2pt, inner sep = 2pt}}        
		\SetVertexLabelOut
	    \SetVertexMath
	    \SetVertexLabel
   		\Vertex[a=0*120-90,d=1,NoLabel]{a}
   		\Vertex[a=1*120-90,d=1,NoLabel]{b}
   		\Vertex[a=2*120-90,d=1,NoLabel]{c}
		\Edge[labelstyle={MyLabel}, label=$\varrho_4$](a)(b) 
		\Edge[labelstyle={MyLabel}, label={$\varrho_4$}](b)(c) 
		\Edge[labelstyle={MyLabel,swap}, label={$\varrho_3$}](a)(c)  
	   \end{tikzpicture} &	
	\begin{tikzpicture}[baseline={(current bounding box.center)}]
		\tikzset{VertexStyle/.style = {shape = circle, draw,minimum size = 2pt, inner sep = 2pt}}        
		\SetVertexLabelOut
	    \SetVertexMath
	    \SetVertexLabel
   		\Vertex[a=0*120-90,d=1,NoLabel]{a}
   		\Vertex[a=1*120-90,d=1,NoLabel]{b}
   		\Vertex[a=2*120-90,d=1,NoLabel]{c}
		\Edge[labelstyle={MyLabel}, label=$\varrho_3$](a)(b) 
		\Edge[labelstyle={MyLabel}, label={$\varrho_3$}](b)(c) 
		\Edge[labelstyle={MyLabel,swap}, label={$\varrho_4$}](a)(c)  
	   \end{tikzpicture} &	
	\begin{tikzpicture}[baseline={(current bounding box.center)}]
		\tikzset{VertexStyle/.style = {shape = circle, draw,minimum size = 2pt, inner sep = 2pt}}        
		\SetVertexLabelOut
	    \SetVertexMath
	    \SetVertexLabel
   		\Vertex[a=0*120-90,d=1,NoLabel]{a}
   		\Vertex[a=1*120-90,d=1,NoLabel]{b}
   		\Vertex[a=2*120-90,d=1,NoLabel]{c}
		\Edge[labelstyle={MyLabel}, label=$\varrho_3$](a)(b) 
		\Edge[labelstyle={MyLabel}, label={$\varrho_4$}](b)(c) 
		\Edge[labelstyle={MyLabel,swap}, label={$\varrho_2$}](a)(c)  
	   \end{tikzpicture}\\[6.5ex]
	   \mathcal{T}_5 & \mathcal{T}_6 & \mathcal{T}_7 & \mathcal{T}_8
	   \end{matrix}
	\]
	Following for each type $\mathcal{T}_i$ we consider triples $(\ta,\tb,\tc)$ and $(\tu,\tv,\tw)$ of vertices of $\mathcal{C}^{(m)}$ that induce substructures isomorphic to $\mathcal{T}_i$, such that the mapping $\varphi\colon \ta\mapsto\tu,\, \tb\mapsto\tv,\,\tc\mapsto\tw$ is an isomorphism. Since $\mathcal{C}^{(m)}$ is $2$-homogeneous, in each case, without loss of generality, we may assume that $\ta=\tu=\tn$ and $\tb=\tv$. Throughout the proof we fix the notation 
	\begin{align*}
		\tc_1 &= \begin{pmatrix}
			\tc_1(1)\\\tilde{c}_1
		\end{pmatrix}, & \tc_2 &=\begin{pmatrix}
			\tc_2(1)\\\tilde{c}_2
		\end{pmatrix}, & \tw_1 &=\begin{pmatrix}
			\tw_1(1)\\\tilde{w}_1
		\end{pmatrix}, & \tw_2 &=\begin{pmatrix}
			\tw_2(1)\\\tilde{w}_2
		\end{pmatrix},
	\end{align*}
	for certain $\tilde{c}_i, \tilde{w}_i\in\F_2^{m-1}$ ($i=1,2$). In each case we will find some $A\in\GL(m,2)$ and some symmetric square matrix $S$ of order $m$ with zero-diagonal, such that 
	\begin{align}\label{goal}
	\begin{pmatrix}
		(A^T)^{-1} & (A^T)^{-1}S\\
		O & A
	\end{pmatrix}
	\begin{pmatrix}
		\tb_1\\
		\tb_2
	\end{pmatrix}&=
	\begin{pmatrix}
		\tv_1\\
		\tv_2
	\end{pmatrix}, &
	\begin{pmatrix}
		(A^T)^{-1} & (A^T)^{-1}S\\
		O & A
	\end{pmatrix}
	\begin{pmatrix}
		\tc_1\\
		\tc_2
	\end{pmatrix}&=
	\begin{pmatrix}
		\tw_1\\
		\tw_2
	\end{pmatrix}.
	\end{align}	
	
	``about $\mathcal{T}_1$:'' Without loss of generality we may assume that  $\tb=\tv=\icol{\te_1\\\tn}$. Moreover, $\tc_2=\tw_2=\tn$.  Choose an $\hat{A}\in\GL(m,2)$ that fixes $\te_1$ and that maps $\tc_1$ to $\tw_1$ (such an $\hat{A}$ exists, since $\tc_1\neq\tn$ and because $\GL(m,2)$ acts $2$-transitively on non-zero vectors). Then with $A\coloneqq (\hat{A}^T)^{-1}$, and $S\coloneqq O$ we have that \eqref{goal} is satisfied.
	
	``about $\mathcal{T}_2$:'' Without loss of generality we may assume that, $\tb=\tv=\icol{\tn\\\te_1}$, $\tc_1(1)=\tw_1(1)=0$, $\tc_2,\tw_2\notin\{\tn,\te_1\}$, and $\tc_1^T\tc_2=\tw_1^T\tw_2=0$. Note that 
	then $\icol{\tilde{c}_1\\\tilde{c}_2},\icol{\tilde{w}_1\\\tilde{w}_2}\in Q_{m-1}\setminus S_{m-1}$. By Lemma~\ref{suborbs} there exists an element of $H_{m-1}$ that maps $\icol{\tilde{c}_1\\\tilde{c}_2}$ to $\icol{\tilde{w}_1\\\tilde{w}_2}$. Thus, by Lemma~\ref{HM}, there exist  $\tilde{A}\in\GL(m-1,2)$ and a symmetric square matrix $\tilde{S}$ of order $m-1$ with zero-diagonal, such that 
	\[
		\begin{pmatrix}
			(\tilde{A}^T)^{-1} & (\tilde{A}^T)^{-1}\tilde{S}\\
			O & \tilde{A}
		\end{pmatrix}
		\begin{pmatrix}
			\tilde{c}_1\\
			\tilde{c}_2	
		\end{pmatrix}=
		\begin{pmatrix}
			\tilde{w}_1\\
			\tilde{w}_2
		\end{pmatrix}.
	\]
	Let $\tilde{x}\in\F_2^{m-1}$, such that $\tilde{x}^T\tilde{c}_2=\tc_2(1)+\tw_2(1)$ (such an $\tilde{x}$ exists because $\tilde{c}_2\neq\tn$). Define
	\begin{align*}
		A&\coloneqq 
		\begin{pmatrix}
		1 & \tilde{x}^T\\
		\tn & \tilde{A}
		\end{pmatrix}, & S &\coloneqq 
		\begin{pmatrix}
		0 & \tn^T\\
		\tn & \tilde{S}
	\end{pmatrix}.
	\end{align*}
	Then, using Lemma~\ref{Scond}, it can be  checked that \eqref{goal} is satisfied.
	
	``about $\mathcal{T}_3$:'' Without loss of generality we may assume that  $\tb=\tv=\icol{\te_1\\\te_1}$, $\tc_1(1)\neq\tc_2(1)$, $\tw_1(1)\neq\tw_2(1)$,  and $\tc_1^T\tc_2=\tw_1^T\tw_2=1$. 
	Observe that $\tilde{c}_1^T\tilde{c}_2=\tilde{w}_1^T\tilde{w}_1=1$. 
	Let $\tilde{x}\in\F_2^{m-1}$, such that $\tilde{x}^T\tilde{c}_2=\tc_1(1)+\tw_1(1)$ (such an $\tilde{x}$ exists because $\tilde{c}_2\neq\tn$).
	Note that
	\[
		(1+\tc_1(1)+\tw_1(1))\tilde{x}^T\tilde{c}_2 = (1+\tilde{x}^T\tilde{c}_2)\tilde{x}^T\tilde{c}_2 = 0.
	\]
	Thus, by Lemma~\ref{suborbs} together with Lemma~\ref{HM}, there exists $\tilde{A}\in\GL(m-1,2)$ and a symmetric square matrix $\tilde{S}$ of order $m-1$ with zero-diagonal, such that 
	\[
		\begin{pmatrix}
			(\tilde{A}^T)^{-1} & (\tilde{A}^T)^{-1}\tilde{S}\\
			O & \tilde{A}
		\end{pmatrix}
		\begin{pmatrix}
			\tilde{c}_1+(1+\tc_1(1)+\tw_1(1))\tilde{x}\\
			\tilde{c}_2	
		\end{pmatrix}=
		\begin{pmatrix}
			\tilde{w}_1\\
			\tilde{w}_2
		\end{pmatrix}.
	\]
 Define
	\begin{align*}
		A&\coloneqq 
		\begin{pmatrix}
		1 & \tilde{x}^T\\
		\tn & \tilde{A}
		\end{pmatrix}, & S &\coloneqq 
		\begin{pmatrix}
		0 & \tilde{x}^T\\
		\tx & \tilde{S}
	\end{pmatrix}.
	\end{align*}
	Then, using Lemma~\ref{Scond}, it can be verified that  \eqref{goal} is satisfied.

	``about $\mathcal{T}_4$:'' Without loss of generality we may assume that $\tb=\tv=\icol{\tn\\\te_1}$, $\tc_1(1)=\tw_1(1)=0$,  $\tc_1\neq\tn$, $\tw_1\neq\tn$, and $\tc_2=\tw_2=\tn$. 
 Let $\tilde{A}\in\GL(m-1,2)$ such that $\tilde{A}\tilde{c}_1=\tilde{w}_1$. Then, with
\[
A\coloneqq 
	\begin{pmatrix}
	1 & \tn^T\\
	\tn & (\tilde{A}^T)^{-1}	
	\end{pmatrix}
\]
and with $S\coloneqq O$, it can be checked that \eqref{goal} is satisfied.

	``about $\mathcal{T}_5$:'' Without loss of generality we may assume that  $\tb=\tv=\icol{\te_1\\\te_1}$, $\tc_1(1)=\tw_1(1)=0$,  $\tc_1\neq\tn$, $\tw_1\neq\tn$, and $\tc_2=\tw_2=\tn$. 
 Let $\tilde{A}\in\GL(m-1,2)$ such that $\tilde{A}\tilde{c}_1=\tilde{w}_1$. Then, with
\[
A\coloneqq 
	\begin{pmatrix}
	1 & \tn^T\\
	\tn & (\tilde{A}^T)^{-1}	
	\end{pmatrix}
\]
and with $S\coloneqq O$ it can be checked that \eqref{goal} is satisfied.

	``about $\mathcal{T}_6$:'' Without loss of generality we may assume that  $\tb=\tv=\icol{\tn\\\te_1}$, $\tc_1(1)=\tw_1(1)=0$, and $\tc_1^T\tc_2=\tw_1^T\tw_2=1$. Observe that $\icol{\tilde{c}_1\\\tilde{c}_2},\icol{\tilde{w}_1\\\tilde{w}_2}\in \F_2^{2m-2}\setminus Q_{m-1}$. By Lemma~\ref{suborbs} in conjunction with Lemma~\ref{HM} there exists $\tilde{A}\in\GL(m-1,2)$ and a symmetric square matrix $\tilde{S}$ of order $m-1$ with zero-diagonal, such that 
	\[
		\begin{pmatrix}
			(\tilde{A}^T)^{-1} & (\tilde{A}^T)^{-1}\tilde{S}\\
			O & \tilde{A}
		\end{pmatrix}
		\begin{pmatrix}
			\tilde{c}_1\\
			\tilde{c}_2	
		\end{pmatrix}=
		\begin{pmatrix}
			\tilde{w}_1\\
			\tilde{w}_2
		\end{pmatrix}.
	\]
	Let $\tilde{x}\in\F_2^{m-1}$, such that $\tilde{x}^T\tilde{c}_2=\tc_2(1)+\tw_2(1)$ (such an $\tilde{x}$ exists because $\tilde{c}_2\neq\tn$). Define
	\begin{align*}
		A&\coloneqq 
		\begin{pmatrix}
		1 & \tilde{x}^T\\
		\tn & \tilde{A}
		\end{pmatrix}, & S &\coloneqq 
		\begin{pmatrix}
		0 & \tn^T\\
		\tn & \tilde{S}
	\end{pmatrix}.
	\end{align*}
	Then, using Lemma~\ref{Scond}, it can be checked that \eqref{goal} is satisfied.
	
	``about $\mathcal{T}_7$:'' Without loss of generality we may assume that  $\tb,\tv=\icol{\tn\\\te_1}$, $\tc_1(1)=\tw_1(1)=1$,   and $\tc_1^T\tc_2=\tw_1^T\tw_2=1$, $\tc_2\neq\te_1$, $\tw_2\neq\te_1$. 
	Then  $\tilde{c}_2\neq\tn$, $\tilde{w}_2\neq\tn$. Let $\tilde{x}\in\F_2^{m-1}\setminus\{\tilde{c}_2\}$, such that $\tilde{x}^T\tilde{c}_2=\tc_2(1)+\tw_2(1)$. Then $(\tilde{c}_1+\tilde{x})^T\tilde{c}_2=\tilde{w}_1^T\tilde{w}_2$. Thus, $\icol{\tilde{c}_1+\tilde{x}\\\tilde{c}_2}$ and $\icol{\tilde{w}_1\\\tilde{w}_2}$ are either both in $\F_2^{m-2}\setminus Q_{m-1}$ or both in $Q_{m-1}\setminus S_{m-1}$. By Lemma~\ref{suborbs} together with Lemma~\ref{HM} there exists $\tilde{A}\in\GL(m-1,2)$ and a symmetric square matrix $\tilde{S}$ of order $m-1$ with zero-diagonal, such that 
	\[
		\begin{pmatrix}
			(\tilde{A}^T)^{-1} & (\tilde{A}^T)^{-1}\tilde{S}\\
			O & \tilde{A}
		\end{pmatrix}
		\begin{pmatrix}
			\tilde{c}_1+\tilde{x}\\
			\tilde{c}_2	
		\end{pmatrix}=
		\begin{pmatrix}
			\tilde{w}_1\\
			\tilde{w}_2
		\end{pmatrix}.
	\]	
	With 	\begin{align*}
	A&\coloneqq \begin{pmatrix}
		1 & \tilde{x}^T\\
		\tn & \tilde{A}
	\end{pmatrix}, &
	S\coloneqq \begin{pmatrix}
		0 & \tn^T\\
		\tn & \tilde{S}
	\end{pmatrix},
	\end{align*} 
	using Lemma~\ref{Scond}, it can be checked that \eqref{goal} is satisfied. 

	``about $\mathcal{T}_8$:'' Without loss of generality we may assume that  $\tb=\tv=\icol{\tn\\\te_1}$, $\tc_1(1)=\tw_1(1)=1$, and $\tc_2=\tw_2=\tn$. 
 Let $\tilde{A}\in\GL(m-1,2)$, such that $\tilde{A}\tilde{c}_1=\tilde{w}_1$. Then with
	\[
	A\coloneqq \begin{pmatrix}
		1 & \tn^T\\
		\tn & (\tilde{A}^T)^{-1}
	\end{pmatrix}
	\]
	and $S\coloneqq O$ it can be checked that \eqref{goal} is satisfied. 
\end{proof}

\section{$(3,5)$-regularity of the graphs $\Gamma^{(m)}$}
The proof of the $(3,5)$-regularity of $\Gamma^{(m)}$ hinges on a recent result reducing the number of  graph types to be checked for regularity. Let us repeat the relevant details:
\begin{definition}
    Let $\bbT=(\Delta,\iota,\Theta)$ be a graph type. Suppose $\Theta=(T,E)$. Let $M\subseteq T$ be the image of $\iota$. Then we define the \emph{closure} $\Cl(\bbT)$ to be the graph with vertex set $T$ and with arc set $E\cup \{(u,v)\mid u,v\in M,\, u\neq v\}$.
\end{definition}
\begin{theorem}[{\cite[Corollary~3.41]{Pec14}}]
	A graph $\Gamma$ is $(m,n+1)$-regular if and only if it is $(m,n)$-regular and $\bbT$-regular for all graph types $\bbT$ of order $(m,n+1)$ for which $\Cl(\bbT)$ is $(m+1)$-connected. 
\end{theorem}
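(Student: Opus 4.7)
The forward direction is immediate from the definitions. For the reverse direction, assume $\Gamma$ is $(m,n)$-regular and $\bbT$-regular for every type $\bbT$ of order $(m,n+1)$ whose closure $\Cl(\bbT)$ is $(m+1)$-connected. One must establish $\bbT'$-regularity for every type $\bbT'$ of order $(k,l)$ with $k\le m$ and $k\le l\le n+1$. The cases $l\le n$ are covered by the $(m,n)$-regularity hypothesis, so it suffices to treat types of order $(k,n+1)$ with $k\le m$.

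The first stage handles a type $\bbT=(\Delta,\iota,\Theta)$ of order $(m,n+1)$ whose closure is not $(m+1)$-connected. Set $M\coloneqq\iota(V(\Delta))$ and pick a vertex cut $S$ of $\Cl(\bbT)$ with $|S|\le m$. Because $M$ is a clique in $\Cl(\bbT)$, the set $M\setminus S$ lies in a single component of $\Cl(\bbT)-S$; call it $C_1$, and let $C_2,\dots,C_r$ denote the remaining components, all disjoint from $M$. Since $r\ge 2$, each $|S\cup C_j|$ is at most $n$. I would peel off one component $C_j$ with $j\ge 2$ at a time: an embedding $\hat\kappa\colon\Theta\injto\Gamma$ extending $\kappa$ decomposes as an embedding $\hat\kappa'$ of $\Theta-C_j$ extending $\kappa$ (a graph of order at most $n$) together with an extension of $\hat\kappa'|_S$ to the subgraph induced on $S\cup C_j$ whose image is disjoint from $\hat\kappa'(V(\Theta-C_j)\setminus S)$. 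The disjointness condition is imposed by an inclusion-exclusion over partial identifications of vertices of $C_j$ with vertices already used by $\hat\kappa'$; each term of this expansion is a count of embeddings of some quotient graph of order at most $n$, hence constant in $\kappa$ by $(m,n)$-regularity. Iterating the peel establishes $\bbT$-regularity.

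The second stage propagates the conclusion from $k=m$ down to $k<m$ by downward induction on $k$. Given $\bbT=(\Delta,\iota,\Theta)$ of order $(k,n+1)$ with $k<m$, pick any vertex $v\in V(\Theta)\setminus\iota(V(\Delta))$, let $\Delta'$ be the subgraph of $\Theta$ induced on $\iota(V(\Delta))\cup\{v\}$, and factor $\iota=\iota''\circ\iota'$ with $\iota'\colon\Delta\injto\Delta'$ and $\iota''\colon\Delta'\injto\Theta$. Then
\[
\#(\Gamma,\bbT,\kappa)=\sum_{\substack{\kappa'\colon\Delta'\injto\Gamma\\\kappa'\circ\iota'=\kappa}}\#(\Gamma,\bbT'',\kappa'),
\]
where $\bbT''=(\Delta',\iota'',\Theta)$ has order $(k+1,n+1)$. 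By the inductive hypothesis $\#(\Gamma,\bbT'',\kappa')$ is a constant, and the number of outer summands equals $\#(\Gamma,(\Delta,\iota',\Delta'),\kappa)$, a count for a type of order $(k,k+1)$ that is constant by $(m,n)$-regularity. Hence $\#(\Gamma,\bbT,\kappa)$ is independent of $\kappa$.

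The main obstacle is the inclusion-exclusion in the first stage: one has to verify that every quotient graph arising via identifications really has order at most $n$ so that $(m,n)$-regularity applies, and that the peeling terminates in a setting covered by the hypothesis. Choosing each $C_j$ to be a component of minimum size controls the number of collision terms, and a clean induction on the number of components of $\Cl(\bbT)-S$, with the $(m+1)$-connected case as base, should close the argument.
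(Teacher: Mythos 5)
Your Stage 2 (the downward induction on $k$, splitting off one extra vertex of $\Delta$ at a time) is fine. The genuine gap is in Stage 1, and it sits exactly where the hypothesis on types with $(m+1)$-connected closure ought to enter --- note that your Stage 1 never invokes that hypothesis at all. In this paper (and in \cite{Pec14}) embeddings are \emph{induced} embeddings: $\hat\kappa$ must carry non-edges of $\Theta$ to non-edges of $\Gamma$. When you recombine an embedding $\hat\kappa'$ of $\Theta-C_j$ with an extension of $\hat\kappa'\restr_S$ to $\Theta[S\cup C_j]$, disjointness of the images is not sufficient: you must also force $\Gamma$ to have \emph{no} edges between the image of $C_j$ and the image of $V(\Theta)\setminus(S\cup C_j)$, since all these pairs are non-edges of $\Theta$. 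Your inclusion--exclusion over vertex collisions only restores injectivity; it is blind to these cross non-edges, so what you actually compute is $\sum_{\Theta'}\#(\Gamma,(\Delta,\iota,\Theta'),\kappa)$, summed over all graphs $\Theta'$ obtained from $\Theta$ by adding edges between $C_j$ and the other components --- not $\#(\Gamma,\bbT,\kappa)$ itself.

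The gap is fatal, not cosmetic. Take $m=2$, $n=3$, $\Delta$ the non-edge $\{a,b\}$, and $\Theta$ the path $a\,c\,b$ together with an isolated vertex $d$; then $\Cl(\bbT)$ is a triangle plus an isolated vertex, hence not $3$-connected, and your Stage 1 would conclude from strong regularity alone that the count is the constant $\mu(v-3)$. The true induced count at a non-adjacent pair $(x,y)$ is $\mu(v-3k+2\lambda+\mu)-2e(x,y)$, where $e(x,y)$ is the number of edges among the common neighbours of $x$ and $y$; constancy of $e(x,y)$ (and of the companion quantity for edges, which is caught by the sibling type with $\Delta$ an edge and $\Theta$ a triangle plus isolated vertex) is precisely the non-trivial part of the $4$-vertex condition for strongly regular graphs, and it fails for the many strongly regular graphs that are $(2,3)$- but not $(2,4)$-regular. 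So regularity for the non-$(m+1)$-connected types does \emph{not} follow from $(m,n)$-regularity alone. A correct argument must additionally expand over the possible adjacencies of $\Gamma$ across the cut; the correction terms are then types of full order $n+1$ with strictly more edges, and one disposes of them by a downward induction (say on the number of edges of $\Theta$) whose anchor is exactly the class of types with $(m+1)$-connected closure --- this is where the extra hypothesis is genuinely used. (The paper itself does not reprove the statement; it cites Corollary~3.41 of \cite{Pec14}, whose proof has to account for both identifications and added cross-edges in this manner.)
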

We know that $\Gamma^{(m)}$ is $(3,4)$-regular. Next we enumerate all graph types of order $(3,5)$ whose closure is $4$-connected. The only $4$-connected graph of order $5$ is $K_5$. Thus the graph types of order $(3,5)$ with $4$-connected closure are:
\begin{align*}
	\bbT_1\colon\, &   
	\begin{tikzpicture}[rotate=-162,scale=0.4,baseline={(current bounding box.center)}]
		\tikzset{VertexStyle/.style = {shape = circle, draw,minimum size = 2pt, inner sep = 2pt}}        
		\SetVertexMath
        \Vertices[unit=3,NoLabel]{circle}{A,B,C,D,E}
        \Edges(A,E,B,D,C,E,D,A)
        \AddVertexColor{black}{A,B,C}
        \Edges(A,B,C,A)
    \end{tikzpicture} & \bbT_2\colon\, &
	\begin{tikzpicture}[rotate=-162,scale=0.4,baseline={(current bounding box.center)}]
		\tikzset{VertexStyle/.style = {shape = circle, draw,minimum size = 2pt, inner sep = 2pt}}        
		\SetVertexMath
        \Vertices[unit=3,NoLabel]{circle}{A,B,C,D,E}
        \Edges(A,E,B,D,C,E,D,A)
        \AddVertexColor{black}{A,B,C}
        \Edges(A,B,C)
    \end{tikzpicture} & \bbT_3\colon\, &
	\begin{tikzpicture}[rotate=-162,scale=0.4,baseline={(current bounding box.center)}]
		\tikzset{VertexStyle/.style = {shape = circle, draw,minimum size = 2pt, inner sep = 2pt}}        
		\SetVertexMath
        \Vertices[unit=3,NoLabel]{circle}{A,B,C,D,E}
        \Edges(A,E,B,D,C,E,D,A)
        \AddVertexColor{black}{A,B,C}
        \Edges(A,B)
    \end{tikzpicture} & \bbT_4\colon\, &
	\begin{tikzpicture}[rotate=-162,scale=0.4,baseline={(current bounding box.center)}]
		\tikzset{VertexStyle/.style = {shape = circle, draw,minimum size = 2pt, inner sep = 2pt}}        
		\SetVertexMath
        \Vertices[unit=3,NoLabel]{circle}{A,B,C,D,E}
        \Edges(A,E,B,D,C,E,D,A)
        \AddVertexColor{black}{A,B,C}
    \end{tikzpicture}
\end{align*}
In other words, from the (up to isomorphism) $148$ graph types of order $(3,5)$ only $4$ have to be checked in order to prove that $\Gamma^{(m)}$ is $(3,5)$-regular.

In the course of the proof of $(3,5)$-regularity of $\Gamma^{(m)}$ the following classical graph theoretical concept will play a crucial role:
\begin{definition}
	Let $\Gamma$ be a graph. An \emph{equitable partition} of $\Gamma$ is an ordered partition $(M_1,\dots,M_n)$ of $V(\Gamma)$, such that for all $i,j\in\{1,\dots,n\}$ there exists a non-negative integer $a_{ij}$, such that for all $v\in M_i$ the number of neighbours of $v$ in $M_j$ is equal to $a_{ij}$. The matrix $(a_{ij})_{i,j=1}^n$ is called the \emph{partition matrix} of the equitable partition. 
\end{definition} 

\subsubsection*{Proof of $\bbT_1$-regularity}
To fix notation, let $\bbT_1=(\Delta,\iota,\Theta)$, where $V(\Delta)=\{a,b,c\}$ and $E(\Delta)=\{(a,b),(a,c),(b,a),(b,c),(c,a),(c,b)\}$. Let $\kappa_1,\kappa_2\colon\Delta\injto\Gamma^{(m)}$ be two embeddings. In particular, suppose $\kappa_1\colon a\mapsto \tu,\,b\mapsto \tv,\,c\mapsto \tw$ and $\kappa_2\colon a\mapsto\tx,\, b\mapsto\ty,\, c\mapsto\tz$. Then, since $\mathcal{C}^{(m)}$ is $3$-homogeneous, there exists an automorphism $\varphi$ of $\mathcal{C}^{(m)}$, such that $\varphi\colon \tu\mapsto\tx,\,\tv\mapsto\ty,\,\tw\mapsto\tz$. In other words, $\kappa_2=\varphi\circ\kappa_1$. In particular, since $\Aut(\mathcal{C}^{(m)})=\Aut(\Gamma^{(m)})$, we obtain that  $\#(\Gamma^{(m)},\bbT_1,\kappa_1)=\#(\Gamma^{(m)},\bbT_1,\kappa_2)$. Since $\kappa_1$ and $\kappa_2$ were chosen arbitrarily, we conclude that $\Gamma^{(m)}$ is $\bbT_1$-regular.

\subsubsection*{Proof of $\bbT_2$-regularity}
Let the graph type $\bbT_2=(\Delta,\iota,\Theta)$ be given by the following labeled diagram:
\[
	\bbT_2\colon\quad\begin{tikzpicture}[scale=0.4,baseline={(current bounding box.center)}]
		\tikzset{VertexStyle/.style = {shape = circle, draw,minimum size = 2pt, inner sep = 2pt}}        
		\SetVertexMath
		\SetVertexLabelOut
   		\Vertex[a=0*72-90,d=3,Lpos=0*72-90,Ldist=0,L=b]{B}
   		\Vertex[a=1*72-90,d=3,Lpos=1*72-90,Ldist=0,L=c]{C}
   		\Vertex[a=2*72-90,d=3,Lpos=2*72-90,Ldist=0,L=d]{D}
   		\Vertex[a=3*72-90,d=3,Lpos=3*72-90,Ldist=0,L=e]{E}
   		\Vertex[a=4*72-90,d=3,Lpos=4*72-90,Ldist=0,L=a]{A}
        \Edges(A,E,B,D,C,E,D,A)
        \AddVertexColor{black}{A,B,C}
        \Edges(A,B,C)
    \end{tikzpicture}
\]
That is, $V(\Delta)=\{a,b,c\}$, $V(\Theta)=\{a,b,c,d,e\}$, and $\iota\colon\Delta\injto\Theta$ is the identical embedding. Let $\kappa\colon\Delta\injto\Gamma^{(m)}$. Let $\bbT$ be the graph type obtained from $\bbT_2$ by removing the vertex $b$. That is, $\bbT$ has the following diagram:
\[
	\bbT\colon\quad \begin{tikzpicture}[scale=0.4,baseline={(current bounding box.center)}]
		\tikzset{VertexStyle/.style = {shape = circle, draw,minimum size = 2pt, inner sep = 2pt}}        
		\SetVertexMath
		\SetVertexLabelOut
   		\Vertex[a=1*72-90,d=3,Lpos=1*72-90,Ldist=0,L=c]{C}
   		\Vertex[a=2*72-90,d=3,Lpos=2*72-90,Ldist=0,L=d]{D}
   		\Vertex[a=3*72-90,d=3,Lpos=3*72-90,Ldist=0,L=e]{E}
   		\Vertex[a=4*72-90,d=3,Lpos=4*72-90,Ldist=0,L=a]{A}
        \Edges(A,E,C,D,A)
        \Edge(E)(D)
        \AddVertexColor{black}{A,C}
    \end{tikzpicture}
\]
Let $\tv\coloneqq \kappa(b)$. Observe, that $\#(\Gamma^{(m)},\bbT_2,\kappa)=\#(\Gamma_1^{(m)}(\tv),\bbT,\kappa\restr_{\{a,c\}})$. Since $\Gamma_1^{(m)}(\tv)$ is $(2,4)$-regular, it is in particular $\bbT$-regular. Thus, since $\Aut(\Gamma^{(m)})$ is transitive, we obtain that $\Gamma^{(m)}$ is $\bbT_2$-regular.

\subsubsection*{Proof of $\bbT_3$-regularity}
Let the graph type $\bbT_3=(\Delta,\iota,\Theta)$ be given by the following labelled diagram:
\[
	\bbT_3\colon\quad\begin{tikzpicture}[scale=0.4,baseline={(current bounding box.center)}]
		\tikzset{VertexStyle/.style = {shape = circle, draw,minimum size = 2pt, inner sep = 2pt}}        
		\SetVertexMath
		\SetVertexLabelOut
   		\Vertex[a=0*72-90,d=3,Lpos=0*72-90,Ldist=0,L=b]{B}
   		\Vertex[a=1*72-90,d=3,Lpos=1*72-90,Ldist=0,L=c]{C}
   		\Vertex[a=2*72-90,d=3,Lpos=2*72-90,Ldist=0,L=d]{D}
   		\Vertex[a=3*72-90,d=3,Lpos=3*72-90,Ldist=0,L=e]{E}
   		\Vertex[a=4*72-90,d=3,Lpos=4*72-90,Ldist=0,L=a]{A}
        \Edges(A,E,B,D,C,E,D,A)
        \AddVertexColor{black}{A,B,C}
        \Edges(A,C)
    \end{tikzpicture}
\]
That is, $V(\Delta)=\{a,b,c\}$, $V(\Theta)=\{a,b,c,d,e\}$, and $\iota\colon\Delta\injto\Theta$ is the identical embedding. 

Up to symmetries of $\Gamma^{(m)}$ and of $\bbT_3$ there are two kinds of embeddings of $\Delta$
into $\Gamma^{(m)}$. They are distinguished by their induced image in $\mathcal{C}^{(m)}$: Let $\kappa\colon\Delta\injto\Gamma^{(m)}$. Let us denote $\kappa(a)=:\tu$, $\kappa(b)=:\tv$ and $\kappa(c)=:\tw$. Then we have that $\#(\Gamma^{(m)},\bbT_3,\kappa)$ is equal to the number of arcs in the subgraph of $\Gamma^{(m)}$ induced by the set of joint neighbours of $\tu$, $\tv$, and $\tw$. 

Note that $\{\tu,\tv,\tw\}$ induce one of the following two subcolorgraphs in $\mathcal{C}^{(m)}$:
	\begin{align*}
		(1) &:	
   		\begin{tikzpicture}[baseline={(current bounding box.center)}]
		\tikzset{VertexStyle/.style = {shape = circle, draw,minimum size = 2pt, inner sep = 2pt}}        
		\SetVertexLabelOut
	    \SetVertexMath
	    \SetVertexLabel
   		\Vertex[a=0*120-90,d=1,Lpos=0*120-90,Ldist=0,L=\bar{v}]{c}
   		\Vertex[a=1*120-90,d=1,Lpos=1*120-90,Ldist=0,L=\bar{w}]{a}
   		\Vertex[a=2*120-90,d=1,Lpos=2*120-90,Ldist=0,L=\bar{u}]{z}
		\Edge[lw=2pt,labelstyle={MyLabel,swap}, label=$\varrho_3^{(m)}$](z)(a) 
		\Edge[labelstyle={MyLabel}, label={$\varrho_4^{(m)}$}](z)(c) 
		\Edge[labelstyle={MyLabel,swap}, label={$\varrho_4^{(m)}$}](a)(c) 
	   \end{tikzpicture} 
	   & (2) &:
   		\begin{tikzpicture}[baseline={(current bounding box.center)}]
		\tikzset{VertexStyle/.style = {shape = circle, draw,minimum size = 2pt, inner sep = 2pt}}        
		\SetVertexLabelOut
	    \SetVertexMath
	    \SetVertexLabel
   		\Vertex[a=0*120-90,d=1,Lpos=0*120-90,Ldist=0,L=\bar{v}]{c}
   		\Vertex[a=1*120-90,d=1,Lpos=1*120-90,Ldist=0,L=\bar{w}]{a}
   		\Vertex[a=2*120-90,d=1,Lpos=2*120-90,Ldist=0,L=\bar{u}]{z}
		\Edge[lw=2pt,labelstyle={MyLabel,swap}, label=$\varrho_3^{(m)}$](z)(a) 
		\Edge[labelstyle={MyLabel}, label={$\varrho_2^{(m)}$}](z)(c) 
		\Edge[labelstyle={MyLabel,swap}, label={$\varrho_4^{(m)}$}](a)(c) 
	   \end{tikzpicture} 
	\end{align*}

Let us start with the first kind. Since $\mathcal{C}^{(m)}$ is $3$-homogeneous, without loss of generality we can assume that $\tu=\icol{\tn\\\tn}$, $\tw=\icol{\tn\\\te_m}$, and $\tv=\icol{\te_1\\\te_1}$. 
\begin{lemma}\label{vert2}
	Let $\bar{x}\in \F_2^{2m}$. Then $\bar{x}$ is a  joint neighbour of $\{\bar{u},\bar{v},\bar{w}\}$ in $\Gamma^{(m)}$ if and only if 
\begin{enumerate}
	\item $\bar{x}_2\neq\bar{0}$,
	\item $\bar{x}_1^T\bar{x}_2 = 0$,
	\item $\tx_1(m)=0$,
	\item $\tx_1(1)\neq\tx_2(1)$,
	\item $\tx_2(m)=1 \Longrightarrow (\tx_2(1),\dots,\tx_2(m-1))\neq\tn^T$,
	\item $\tx_2(1)=1 \Longrightarrow (\tx_2(2),\dots,\tx_2(m))\neq\tn^T$.
\end{enumerate}	
\end{lemma}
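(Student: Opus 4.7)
The plan is to prove this by direct unpacking of the edge relation $E^{(m)}=\{(\bar{a},\bar{b})\mid (\bar{a}_1+\bar{b}_1)^T(\bar{a}_2+\bar{b}_2)=0,\ \bar{a}_2\ne\bar{b}_2\}$, applied to each of the three fixed vertices $\bar{u}$, $\bar{v}$, $\bar{w}$ in turn. No deeper machinery is needed: the six conditions are going to correspond, two per vertex, to the orthogonality condition and the $\bar{x}_2\ne\bar{y}_2$ condition of adjacency.

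First, I will handle adjacency to $\bar{u}=\icol{\tn\\\tn}$. The edge condition reduces to $\bar{x}_1^T\bar{x}_2=0$ and $\bar{x}_2\ne\tn$, giving conditions (1) and (2) immediately. Next, adjacency to $\bar{w}=\icol{\tn\\\te_m}$ expands to
\[
(\bar{x}_1+\tn)^T(\bar{x}_2+\te_m)=\bar{x}_1^T\bar{x}_2+\bar{x}_1^T\te_m=0,\qquad \bar{x}_2\ne\te_m.
\]
Using (2), the first equation collapses to $\bar{x}_1(m)=0$, which is (3); the second, written out, is precisely the implication (5). Finally, adjacency to $\bar{v}=\icol{\te_1\\\te_1}$ expands to
\[
(\bar{x}_1+\te_1)^T(\bar{x}_2+\te_1)=\bar{x}_1^T\bar{x}_2+\bar{x}_1(1)+\bar{x}_2(1)+1=0,\qquad \bar{x}_2\ne\te_1,
\]
which, again using (2), gives $\bar{x}_1(1)+\bar{x}_2(1)=1$ (condition (4)) together with the implication (6).

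Conversely, if all six conditions hold, then each of the three edge conditions is satisfied by exactly the computations above read in reverse, so $\bar{x}$ is a joint neighbour of $\{\bar{u},\bar{v},\bar{w}\}$. I do not anticipate any real obstacle: the only thing to be careful about is translating the non-equalities $\bar{x}_2\ne\te_m$ and $\bar{x}_2\ne\te_1$ into the implicational form (5) and (6), and noticing that condition (2) is what allows the bilinear expansions in the $\bar{w}$ and $\bar{v}$ calculations to simplify to a single scalar equation. The proof is therefore a short, essentially bookkeeping verification.
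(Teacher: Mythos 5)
Your verification is correct, and it is exactly the routine unpacking that the paper's proof leaves implicit (the paper simply writes ``Clear''): adjacency to $\bar{u}$, $\bar{w}$, $\bar{v}$ in turn yields (1)--(2), (3) and (5), and (4) and (6), with the inequalities $\bar{x}_2\neq\te_m$, $\bar{x}_2\neq\te_1$ rewritten as the stated implications. No issues.
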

\begin{proof}
	Clear.
\end{proof}
In the following, by $M$ we will denote the set of joint neighbours of $\{\tu,\tv,\tw\}$ in $\Gamma^{(m)}$. Using  Lemma~\ref{vert2} we partition $M$ into $6$ natural classes:
\begin{align*}
	M_1 &= \{\tx\in M\mid \tx_1(1)=1, \tx_2(1)=0, \tx_2(m)=0\},\\
	M_2 &= \{\tx\in M\mid \tx_1(1)=1, \tx_2(1)=0, \tx_2(m)=1\},\\
	M_3 &= \{\tx\in M\mid \tx_1(1)=0, \tx_2(1)=1, \tx_2(m)=0\},\\
	M_4 &= \{\tx\in M\mid \tx_1(1)=0, \tx_2(1)=1, \tx_2(m)=1, (\tx_2(2),\dots,\tx_2(m-1))\neq \tn^T\},\\
	M_5 &= \{\tx\in M\mid \tx_1(1)=0, \tx_1\neq\tn, \tx_2(1)=1, \tx_2(m)=1, (\tx_2(2),\dots,\tx_2(m-1))= \tn^T\},\\ 
	M_6 &= \{\tx\in M\mid \tx_1=\tn, \tx_2=\te_1+\te_m\}.
\end{align*}
We claim that $(M_1,\dots,M_6)$ is an equitable partition of $\langle M\rangle_{\Gamma^{(m)}}$. For the proof of this claim consider now the projection 
\begin{equation}\label{projection}
\Pi\colon \F_2^{2m}\epito\F_2^{2m-4}:\quad 
 \begin{pmatrix}
 	\tx_1\\
 	\tx_2
 \end{pmatrix}\mapsto
 \begin{pmatrix}
 	\tilde{x}_1\\
 	\tilde{x}_2
 \end{pmatrix}, 
\end{equation}
where $\tilde{x}_i$ is the unique element of $\F_2^{m-2}$, such that $\tx_i=\icol{\tx_i(1)\\\tilde{x}_i\\\tx_i(m)}$ (where $i=1,2$). Observe that for each $i\in\{1,\dots,6\}$ we have that $\Pi\restr_{M_i}$ is one-to-one. Routine computations show that the projection of $\langle M\rangle_{\Gamma^{(m)}}$ in $\mathcal{C}^{(m-2)}$ looks as follows:
\[\scalebox{0.9}{
\begin{tikzpicture}[node distance=50mm, terminal/.style={rounded rectangle,minimum size=6mm,draw},tight/.style={inner sep=-2pt}]
\node (M5) [terminal,anchor=center] {$\Pi(M_5)=S_{m-2}\setminus\{\tn\}$};
\node (M1) [terminal, above left= of M5,anchor=center] {$\Pi(M_1)=Q_{m-2}\setminus S_{m-2}$};
\node (M2) [terminal, above right= of M5,anchor=center] {$\Pi(M_2)=Q_{m-2}\setminus S_{m-2}$};
\node (M3) [terminal, below right= of M5,anchor=center] {$\Pi(M_3)=Q_{m-2}\setminus S_{m-2}$};
\node (M4) [terminal, below left= of M5,anchor=center] {$\Pi(M_4)=Q_{m-2}\setminus S_{m-2}$};
\node (M6) [terminal, below= 50mm of M5,anchor=center] {$\Pi(M_6)=\{\tn\}$};
\path[thick, draw]  (M1) edge ["$\varrho_4^{(m-2)}$" inner sep=0pt] (M5);
\path[thick, draw]  (M1) edge [out=-60,in=160, "$\varrho_4^{(m-2)}$"  inner sep=-1pt,near end,swap](M3);
\path[thick, draw]  (M1) edge [swap, "$\varrho_4^{(m-2)}$"](M4);
\path[thick, draw]  (M1) edge ["$\varrho_1^{(m-2)}\cup\varrho_2^{(m-2)}\cup\varrho_3^{(m-2)}$"](M2);
\path[thick, draw]  (M2) edge [out=-120,in=20, "$\varrho_4^{(m-2)}$" tight, near start](M4);
\path[thick, draw]  (M2) edge [swap, "$\varrho_4^{(m-2)}$" inner sep=-3pt] (M5);
\path[thick, draw]  (M2) edge ["$\varrho_4^{(m-2)}$"](M3);
\path[thick, draw]  (M3) edge [swap,  "$\varrho_3^{(m-2)}$" inner sep=0pt] (M5);
\path[thick, draw]  (M3) edge ["$\varrho_3^{(m-2)}$" tight](M6);
\path[thick, draw]  (M3) edge ["$\varrho_1^{(m-2)}\cup\varrho_2^{(m-2)}\cup\varrho_3^{(m-2)}$"] (M4);
\path[thick, draw]  (M4) edge ["$\varrho_3^{(m-2)}$" inner sep=-3pt] (M5);
\path[thick, draw]  (M4) edge [swap,"$\varrho_3^{(m-2)}$" inner sep=0pt](M6);
\path[thick, draw] (M1) edge [loop above, min distance=20mm,in=45,out=135,"$\varrho_3^{(m-2)}$"] (M1);
\path[thick, draw] (M2) edge [loop above, min distance=20mm,in=45,out=135,"$\varrho_3^{(m-2)}$"] (M2); 
\path[thick, draw] (M3) edge [loop below, min distance=20mm,in=-45,out=-135,swap, "$\varrho_3^{(m-2)}$"] (M3); 
\path[thick, draw] (M4) edge [loop below, min distance=20mm,in=-45,out=-135,swap, "$\varrho_3^{(m-2)}$"] (M4);   
\end{tikzpicture}}
\]
To be more precise, an edge in the above given diagram from $\Pi(M_i)$ to $\Pi(M_j)$ labelled  by a relation $\sigma$ means that for all $\tx\in M_i$, $\ty\in M_j$, we have that $(\tx,\ty)\in E(\langle M\rangle_{\Gamma^{(m)}})$ if and only if $(\Pi(\tx),\Pi(\ty))\in\sigma$. An immediate consequence of this observation is that $(M_1,\dots, M_6)$ is an equitable partition of $\langle M\rangle_{\Gamma^{(m)}}$, as was claimed before. Moreover, its partition matrix is given by: 
\[
\begin{pmatrix}\def\arraystretch{1.5}
 p_{33}^3 & p_{31}^3+p_{32}^3+p_{33}^3 & p_{34}^3 & p_{34}^3 & p_{24}^3 & 0 \\
 p_{31}^3+p_{32}^3+p_{33}^3 & p_{33}^3 & p_{34}^3 & p_{34}^3 & p_{24}^3 & 0 \\
 p_{34}^3 & p_{34}^3 & p_{33}^3 & p_{31}^3+p_{32}^3+p_{33}^3 & p_{23}^3 & p_{13}^3\\
 p_{34}^3 & p_{34}^3 &  p_{31}^3+p_{32}^3+p_{33}^3 & p_{33}^3 & p_{23}^3 & p_{13}^3 \\
 p_{34}^2 & p_{34}^2 & p_{33}^2 & p_{33}^2 & 0 & 0\\
 0 & 0 & p_{33}^1 & p_{33}^1 & 0 & 0 
\end{pmatrix}
\]
Here, for saving space,  in each case instead of $p_{ij}^k(m-2)$ we wrote just $p_{ij}^k$.

It is now easy to compute the number of arcs in $\langle M\rangle_{\Gamma^{(m)}}$. It is
\begin{equation}\label{arcscase1}
\setlength\arraycolsep{2pt}
\begin{pmatrix}
	\bb_m(2\bb_m-1) \\ \bb_m(2\bb_m-1) \\ \bb_m(2\bb_m-1) \\ \bb_m(2\bb_m-1) \\ 2\bb_m-1 \\ 1
\end{pmatrix}^T
\begin{pmatrix}
	\bb_m(\bb_m-1) & \bb_m^2 & \bb_m(\bb_m-1 & \bb_m(\bb_m-1 & \bb_m & 0\\
	\bb_m^2 & \bb_m(\bb_m-1) & \bb_m(\bb_m-1) & \bb_m(\bb_m-1) & \bb_m & 0\\
	\bb_m(\bb_m-1) & \bb_m(\bb_m-1) & \bb_m(\bb_m-1) & \bb_m^2 & \bb_m-1 & 1\\
	\bb_m(\bb_m-1) & \bb_m(\bb_m-1) & \bb_m^2 & \bb_m(\bb_m-1) & \bb_m-1 & 1\\
	\bb_m^2 & \bb_m^2 & \bb_m(\bb_m-1) & \bb_m(\bb_m-1) & 0 & 0\\
	0 & 0 & \bb_m(2\bb_m-1) & \bb_m(2\bb_m-1) & 0 & 0
\end{pmatrix}
\begin{pmatrix}
	1 \\
	1 \\
	1 \\
	1 \\
	1 \\
	1
\end{pmatrix},
\end{equation}
where  the vector on the left hand side of this expression consists of the cardinalities of the $M_i$ ($i=1,\dots,6$).

In principle we know now the number of arcs in $\langle M\rangle_{\Gamma^{(m)}}$, but instead of computing this number outright, we stop at this point and start our consideration of  the second type of embeddings of $\Delta$ into $\Gamma^{(m)}$: 

Let $\kappa$ be an embedding of $\Delta$ into $\Gamma^{(m)} $ of the second kind. Since $\mathcal{C}^{(3)}$ is $3$-homogeneous, without loss of generality we may assume that $\bar{u}=\icol{\bar{0}\\\bar{0}}$,  $\bar{w}=\icol{\bar{0}\\\bar{e}_1}$, and $\bar{v}=\icol{\bar{e}_1\\\bar{0}}$.

\begin{lemma}
	Let $\bar{x}\in \F_2^{2m}$. Then $\bar{x}$ is a  joint neighbour of $\{\bar{u},\bar{v},\bar{w}\}$ in $\Gamma^{(m)}$ if and only if 
	\begin{enumerate}
		\item $\bar{x}_2\neq\bar{0}$,
		\item $\bar{x}_1^T\bar{x}_2 = 0$,
		\item $\bar{x}_1(1)=\bar{x}_2(1)=0$.
	\end{enumerate} 
\end{lemma}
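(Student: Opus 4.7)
My plan is to unwind the definition of the edge relation of $\Gamma^{(m)}$ once for each of the three anchor vertices $\tu$, $\tv$, $\tw$, and to read off that the resulting constraints are exactly (1)--(3). Recall that $(\ta,\tb)\in E^{(m)}$ iff $\ta_2\neq\tb_2$ and $(\ta_1+\tb_1)^T(\ta_2+\tb_2)=0$. Since every step below is an equivalence, the two directions of the ``if and only if'' get established simultaneously.

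First, for $\tu=\icol{\tn\\\tn}$ the neighbourhood condition reads $\tx_2\neq\tn$ together with $\tx_1^T\tx_2=0$, which is exactly conditions (1) and (2). Assuming these, the condition $\tx\sim\tv$ for $\tv=\icol{\te_1\\\tn}$ reduces via
\[
(\tx_1+\te_1)^T\tx_2=\tx_1^T\tx_2+\te_1^T\tx_2=\tx_2(1)
\]
to $\tx_2(1)=0$; the required inequality $\tx_2\neq\tn$ is already in force. Analogously, $\tx\sim\tw$ for $\tw=\icol{\tn\\\te_1}$ expands to
\[
\tx_1^T(\tx_2+\te_1)=\tx_1^T\tx_2+\tx_1(1)=\tx_1(1),
\]
so this amounts to $\tx_1(1)=0$. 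The residual inequality $\tx_2\neq\te_1$ is automatic, since once $\tx_2(1)=0$ is forced one has $\tx_2\neq\te_1$. Putting the three computations together, the combined constraint on $\tx$ is exactly $\tx_2\neq\tn$, $\tx_1^T\tx_2=0$, $\tx_1(1)=\tx_2(1)=0$, proving the lemma.

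There is really no obstacle in this proof: the whole argument is a short piece of $\F_2$-arithmetic, which is why the authors dispatch the analogous Lemma~\ref{vert2} in a single word (``Clear.''). The only point worth flagging while writing it up is to confirm that the distinct-second-coordinate requirements $\tx_2\neq\tn$ and $\tx_2\neq\te_1$ coming from edges $\tx\sim\tu$ and $\tx\sim\tw$ are fully absorbed by conditions (1) and (3), and therefore do not contribute a fourth clause to the characterisation.
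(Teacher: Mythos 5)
Your proof is correct and is exactly the routine verification the paper intends: the paper's own proof is just ``Clear.'' Unwinding the edge relation at $\tu$, $\tv$, $\tw$ and noting that the inequalities $\tx_2\neq\tn$ and $\tx_2\neq\te_1$ are absorbed by conditions (1) and (3) is all that is needed.
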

\begin{proof}
	Clear.
\end{proof}
If we denote the  set of joint neighbours of $\{\tu,\tv,\tw\}$ in $\Gamma^{(m)}$ by $N$, then as an immediate consequence we obtain that $\langle N\rangle_{\Gamma^{(m)}}$ is isomorphic to $\Gamma_1^{(m-1)}$. So in principle we can count the arcs in $\langle N\rangle_{\Gamma^{(m)}}$. However, in order to compare this number with the data computed in the first case it is more convenient if we give a description of $\langle N\rangle_{\Gamma^{(m)}}$ with respect to a suitable equitable partition: We partition $N$ into the following $6$ parts:
\begin{align*}
	N_1 &= \{\tx\in N\mid \tx_1(m)=1, \tx_2(m)=0\},\\
	N_2 &= \{\tx\in N\mid \tx_1(m)=1, \tx_2(m)=1\},\\
	N_3 &= \{\tx\in N\mid \tx_1(m)=0, \tx_2(m)=0\},\\
	N_4 &= \{\tx\in N\mid \tx_1(m)=0, \tx_2(m)=1,(\tx_2(1),\dots,\tx_2(m-1))\neq\tn^T\},\\
	N_5 &= \{\tx\in N\mid \tx_1(m)=0, \tx_2(m)=1, \tx_1\neq\tn, (\tx_2(1),\dots,\tx_2(m-1))=\tn^T\},\\
	N_6 &= \{\tx\in N\mid \tx_1(m)=0, \tx_2(m)=1, \tx_1=\tn, (\tx_2(1),\dots,\tx_2(m-1))=\tn^T\}.  
\end{align*}
As before we examine how $\langle N\rangle_{\Gamma^{(m)}}$ looks like when projected by the projection $\Pi$ from \eqref{projection}. First we observe that the restrictions $\Pi$ to the classes $N_i$ ($i=1,\dots,6$) are all one-to-one. The projection of $\langle N\rangle_{\Gamma^{(m)}}$ with respect to the projection $\Pi$ in $\mathcal{C}^{(m)}$ is given in the following diagram:
\[\scalebox{0.90}{
\begin{tikzpicture}[node distance=50mm, terminal/.style={rounded rectangle,minimum size=6mm,draw},tight/.style={inner sep=-2pt}]
\node (M5) [terminal,anchor=center] {$\Pi(N_5)=S_{m-2}\setminus\{\tn\}$};
\node (M1) [terminal, above left= of M5,anchor=center] {$\Pi(N_1)=Q_{m-2}\setminus S_{m-2}$};
\node (M2) [terminal, above right= of M5,anchor=center] {$\Pi(N_2)=\F_2^{2m-2}\setminus Q_{m-1}$};
\node (M3) [terminal, below right= of M5,anchor=center] {$\Pi(N_3)=Q_{m-2}\setminus S_{m-2}$};
\node (M4) [terminal, below left= of M5,anchor=center] {$\Pi(N_4)=Q_{m-2}\setminus S_{m-2}$};
\node (M6) [terminal, below= 50mm of M5,anchor=center] {$\Pi(N_6)=\{\tn\}$};
\path[thick, draw]  (M1) edge ["$\varrho_4^{(m-2)}$" inner sep=0pt] (M5);
\path[thick, draw]  (M1) edge [out=-60,in=160, "$\varrho_3^{(m-2)}$"  inner sep=-1pt,near end,swap](M3);
\path[thick, draw]  (M1) edge [swap, "$\varrho_4^{(m-2)}$"](M4);
\path[thick, draw]  (M1) edge ["$\varrho_1^{(m-2)}\cup\varrho_2^{(m-2)}\cup\varrho_3^{(m-2)}$"](M2);
\path[thick, draw]  (M2) edge [out=-120,in=20, "$\varrho_3^{(m-2)}$" tight, near start](M4);
\path[thick, draw]  (M2) edge [swap, "$\varrho_3^{(m-2)}$" inner sep=-3pt] (M5);
\path[thick, draw]  (M2) edge ["$\varrho_4^{(m-2)}$"](M3);
\path[thick, draw]  (M3) edge [swap,  "$\varrho_3^{(m-2)}$" inner sep=0pt] (M5);
\path[thick, draw]  (M3) edge ["$\varrho_3^{(m-2)}$" tight](M6);
\path[thick, draw]  (M3) edge ["$\varrho_1^{(m-2)}\cup\varrho_2^{(m-2)}\cup\varrho_3^{(m-2)}$"] (M4);
\path[thick, draw]  (M4) edge ["$\varrho_3^{(m-2)}$" inner sep=-3pt] (M5);
\path[thick, draw]  (M4) edge [swap,"$\varrho_3^{(m-2)}$" inner sep=0pt](M6);
\path[thick, draw] (M1) edge [loop above, min distance=20mm,in=45,out=135,"$\varrho_3^{(m-2)}$"] (M1);
\path[thick, draw] (M2) edge [loop above, min distance=20mm,in=45,out=135,"$\varrho_3^{(m-2)}$"] (M2); 
\path[thick, draw] (M3) edge [loop below, min distance=20mm,in=-45,out=-135,swap, "$\varrho_3^{(m-2)}$"] (M3); 
\path[thick, draw] (M4) edge [loop below, min distance=20mm,in=-45,out=-135,swap, "$\varrho_3^{(m-2)}$"] (M4);   
\end{tikzpicture}}
\]
Again, an edge from $\Pi(N_i)$ to $\Pi(N_j)$ labelled by $\sigma$ means that for all $\tx\in N_i$ and for all $\ty\in N_j$ we have $(\tx,\ty)\in E(\langle N\rangle_{\Gamma^{(m)}})$ if and only if $(\Pi(\tx),\Pi(\ty))\in\sigma$. From the diagram we may conclude  that $(N_1,\dots, N_6)$ is an equitable partition of $\langle N\rangle_{\Gamma^{(m)}}$. Its partition matrix is given by:
\[
\begin{pmatrix}
 p_{33}^3 & p_{41}^3+p_{42}^3+p_{43}^3 & p_{33}^3 & p_{34}^3  & p_{24}^3 & 0 \\
 p_{31}^4+p_{32}^4 + p_{33}^4 & p_{43}^4 & p_{34}^4 & p_{33}^4 &  p_{23}^4 & 0 \\
 p_{33}^3 & p_{44}^3 & p_{33}^3 & p_{31}^3+p_{32}^3+p_{33}^3 & p_{23}^3 & p_{13}^3\\
 p_{34}^3 & p_{43}^3 & p_{31}^3+p_{32}^3+p_{33}^3 & p_{33}^3 & p_{23}^3 & p_{13}^3\\
 p_{34}^2 & p_{43}^2 & p_{33}^2 & p_{33}^2 & 0 & 0 \\
 0 & 0 & p_{33}^1 & p_{33}^1 & 0 & 0 \\
\end{pmatrix}
\]
Again, for saving space,  in each case instead of $p_{ij}^k(m-2)$ we wrote just $p_{ij}^k$.

Thus, the number of arcs in $\langle N \rangle_{\Gamma^{(m)}}$ is equal to 
\[
\setlength\arraycolsep{2pt}
\begin{pmatrix}
	\bb_m(2\bb_m-1) \\ \bb_m(2\bb_m-1) \\ \bb_m(2\bb_m-1) \\ \bb_m(2\bb_m-1) \\ 2\bb_m-1 \\ 1
\end{pmatrix}^T
\begin{pmatrix}
	\bb_m(\bb_m-1) & \bb_m^2 & \bb_m(\bb_m-1) & \bb_m(\bb_m-1) & \bb_m & 0\\
	\bb_m^2 & \bb_m(\bb_m-1) & \bb_m(\bb_m-1) & \bb_m(\bb_m-1) & \bb_m & 0\\
	\bb_m(\bb_m-1) & \bb_m(\bb_m-1) & \bb_m(\bb_m-1) & \bb_m^2 & \bb_m-1 & 1\\
	\bb_m(\bb_m-1) & \bb_m(\bb_m-1) & \bb_m^2 & \bb_m(\bb_m-1) & \bb_m-1 & 1\\
	\bb_m^2 & \bb_m^2 & \bb_m(\bb_m-1) & \bb_m(\bb_m-1) & 0 & 0\\
	0 & 0 & \bb_m(2\bb_m-1) & \bb_m(2\bb_m-1) & 0 & 0 
\end{pmatrix}
\begin{pmatrix}
	1 \\
	1 \\
	1 \\
	1 \\
	1 \\
	1
\end{pmatrix},
\]
where the vector on the left hand side consists of the cardinalities of the $N_i$ ($i=1,\dots,6$). However, this is the same expression as in \eqref{arcscase1}. 
To sum up, $\#(\Gamma^{(m)},\bbT_3,\kappa)$ does not depend on the embedding $\kappa$. In other words, $\Gamma^{(m)}$ is $\bbT_3$-regular.

\subsubsection*{Proof of $\bbT_4$-regularity}
Let $\bbT_4$ be given by the following labelled diagram:
\[
	\bbT_4\colon\quad\begin{tikzpicture}[scale=0.4,baseline={(current bounding box.center)}]
		\tikzset{VertexStyle/.style = {shape = circle, draw,minimum size = 2pt, inner sep = 2pt}}        
		\SetVertexMath
		\SetVertexLabelOut
   		\Vertex[a=0*72-90,d=3,Lpos=0*72-90,Ldist=0,L=b]{B}
   		\Vertex[a=1*72-90,d=3,Lpos=1*72-90,Ldist=0,L=c]{C}
   		\Vertex[a=2*72-90,d=3,Lpos=2*72-90,Ldist=0,L=d]{D}
   		\Vertex[a=3*72-90,d=3,Lpos=3*72-90,Ldist=0,L=e]{E}
   		\Vertex[a=4*72-90,d=3,Lpos=4*72-90,Ldist=0,L=a]{A}
        \Edges(A,E,B,D,C,E,D,A)
        \AddVertexColor{black}{A,B,C}
    \end{tikzpicture}
\]
Up to symmetries of $\bbT_4$ and of $\Gamma^{(m)}$ there are $3$ types of embeddings of $\Delta=\overline{K}_3$ into $\Gamma^{(m)}$: Fix an embedding $\kappa\colon\Delta\injto\Gamma^{(m)}$. If we assume that  $\kappa\colon a\mapsto\tu, b\mapsto\tv, c\mapsto\tw$, then the subcolorgraph of $\mathcal{C}^{(m)}$ induced by $\{\tu,\tv,\tw\}$ is one of the following:
	\begin{align*}
		(1) &:	
		\begin{tikzpicture}[baseline={(current bounding box.center)}]
		\tikzset{VertexStyle/.style = {shape = circle, draw,minimum size = 2pt, inner sep = 2pt}}        
		\SetVertexLabelOut
	    \SetVertexMath
	    \SetVertexLabel
   		\Vertex[a=0*120-90,d=1,Lpos=0*120-90,Ldist=0,L=\bar{w}]{c}
   		\Vertex[a=1*120-90,d=1,Lpos=1*120-90,Ldist=0,L=\bar{v}]{a}
   		\Vertex[a=2*120-90,d=1,Lpos=2*120-90,Ldist=0,L=\bar{u}]{z}
		\Edge[labelstyle={MyLabel,swap}, label=$\varrho_2^{(m)}$](z)(a) 
		\Edge[labelstyle={MyLabel}, label={$\varrho_2^{(m)}$}](z)(c) 
		\Edge[labelstyle={MyLabel,swap}, label={$\varrho_2^{(m)}$}](a)(c) 
	   \end{tikzpicture} 
	   & (2) &:
   		\begin{tikzpicture}[baseline={(current bounding box.center)}]
		\tikzset{VertexStyle/.style = {shape = circle, draw,minimum size = 2pt, inner sep = 2pt}}        
		\SetVertexLabelOut
	    \SetVertexMath
	    \SetVertexLabel
   		\Vertex[a=0*120-90,d=1,Lpos=0*120-90,Ldist=0,L=\bar{w}]{c}
   		\Vertex[a=1*120-90,d=1,Lpos=1*120-90,Ldist=0,L=\bar{v}]{a}
   		\Vertex[a=2*120-90,d=1,Lpos=2*120-90,Ldist=0,L=\bar{u}]{z}
		\Edge[labelstyle={MyLabel,swap}, label=$\varrho_2^{(m)}$](z)(a) 
		\Edge[labelstyle={MyLabel}, label={$\varrho_4^{(m)}$}](z)(c) 
		\Edge[labelstyle={MyLabel,swap}, label={$\varrho_4^{(m)}$}](a)(c) 
	   \end{tikzpicture} 
	   & (3) &:
   		\begin{tikzpicture}[baseline={(current bounding box.center)}]
		\tikzset{VertexStyle/.style = {shape = circle, draw,minimum size = 2pt, inner sep = 2pt}}        
		\SetVertexLabelOut
	    \SetVertexMath
	    \SetVertexLabel
   		\Vertex[a=0*120-90,d=1,Lpos=0*120-90,Ldist=0,L=\bar{w}]{c}
   		\Vertex[a=1*120-90,d=1,Lpos=1*120-90,Ldist=0,L=\bar{v}]{a}
   		\Vertex[a=2*120-90,d=1,Lpos=2*120-90,Ldist=0,L=\bar{u}]{z}
		\Edge[labelstyle={MyLabel,swap}, label=$\varrho_4^{(m)}$](z)(a) 
		\Edge[labelstyle={MyLabel}, label={$\varrho_4^{(m)}$}](z)(c) 
		\Edge[labelstyle={MyLabel,swap}, label={$\varrho_4^{(m)}$}](a)(c) 
	   \end{tikzpicture} 
	\end{align*}
\paragraph{About the first type of embeddings:} Since $\mathcal{C}^{(m)}$ is $3$-homogeneous, without loss of generality we can assume that $\tu=\icol{\tn\\\tn}$, $\tv=\icol{\te_1\\\tn}$, $\tw=\icol{\te_2\\\tn}$. 
\begin{lemma}
	Let $\bar{x}\in \F_2^{2m}$. Then $\bar{x}$ is a  joint neighbour of $\{\bar{u},\bar{v},\bar{w}\}$ in $\Gamma^{(m)}$ if and only if 
	\begin{enumerate}
		\item $\bar{x}_2\neq\bar{0}$,
		\item $\bar{x}_1^T\bar{x}_2 = 0$,
		\item $\bar{x}_2(1)=\bar{x}_2(2)=0$.
	\end{enumerate} 
\end{lemma}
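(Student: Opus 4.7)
The plan is simply to unfold the edge relation of $\Gamma^{(m)}$ three times, once for each of $\bar u,\bar v,\bar w$, and to observe that the three resulting constraints on $\bar x$ collapse to items (1)--(3). Recall from the concrete model that $(\bar y,\bar x)\in E^{(m)}$ iff $(\bar y_1+\bar x_1)^T(\bar y_2+\bar x_2)=0$ and $\bar y_2\ne \bar x_2$. Applying this with $\bar y=\bar u=\icol{\tn\\\tn}$ gives $\bar x_1^T\bar x_2=0$ together with $\bar x_2\ne\tn$, which is exactly items (1) and (2). Applying it with $\bar y=\bar v=\icol{\te_1\\\tn}$ yields $(\bar x_1+\te_1)^T\bar x_2=0$, i.e.\ $\bar x_1^T\bar x_2+\te_1^T\bar x_2=0$, which by (2) reduces to $\bar x_2(1)=0$. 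The same computation with $\bar y=\bar w=\icol{\te_2\\\tn}$ forces $\bar x_2(2)=0$, giving (3). The three inequality conditions on $\bar x_2$ coincide because $\bar u_2=\bar v_2=\bar w_2=\tn$, so no further constraint appears beyond item (1). The converse direction is the same chain of equivalences read backwards.

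I expect no real obstacle here: the lemma is a direct unpacking of the bilinear adjacency rule, which is presumably why the authors simply wrote ``Clear.'' The only bookkeeping point worth flagging is that the non-equality constraints coming from $\bar v$ and $\bar w$ do not yield additional information because their second blocks both equal $\tn$; once this is noted, items (1)--(3) are seen to be jointly equivalent to the conjunction of the three adjacency conditions.
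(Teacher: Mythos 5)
Your proposal is correct and matches the paper's intent: the paper's proof is simply ``Clear,'' and your argument is exactly the direct unfolding of the adjacency rule for $\bar u$, $\bar v$, $\bar w$, with the right observation that the three inequality constraints coincide since all second blocks equal $\tn$.
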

\begin{proof}
	Clear.
\end{proof}
As it was done before, the set $M$ of joint neighbors of $\{\tu,\tv,\tw\}$ in $\Gamma^{(m)}$ is subdivided into subsets:
\begin{align*}
	M_1 &= \{\tx\in M\mid \tx_1(1)=0, \tx_1(2)=0, \tx_2(1)=0, \tx_2(2)=0\},\\
	M_2 &= \{\tx\in M\mid \tx_1(1)=0, \tx_1(2)=1, \tx_2(1)=0, \tx_2(2)=0\},\\
	M_3 &= \{\tx\in M\mid \tx_1(1)=1, \tx_1(2)=0, \tx_2(1)=0, \tx_2(2)=0\},\\
	M_4 &= \{\tx\in M\mid \tx_1(1)=1, \tx_1(2)=1, \tx_2(1)=0, \tx_2(2)=0\}.
\end{align*}
Consider the projection
\begin{align} \label{projction2}
	\Pi\colon\F_2^{2m}\epito\F_2^{2m-4},\quad 
	\begin{pmatrix}
		\tx_1\\\tx_2
	\end{pmatrix}\mapsto
	\begin{pmatrix}
		\tilde{x}_1\\\tilde{x}_2
	\end{pmatrix},
\end{align}
where $\tilde{x}_i$ is the unique vector from $\F_2^{m-2}$, such that $\tx_i=\icol{\tx_i(1)\\\tx_i(2)\\\tilde{x}_i}$ ($i=1,2$). Note that for each $i\in\{1,\dots,4\}$ the restriction $\Pi\restr_{M_i}$ is one-to-one. The projection of $\langle M \rangle_{\Gamma^{(m)}}$ under $\Pi$ looks as follows:
\[\scalebox{0.75}{
\begin{tikzpicture}[node distance=20mm, terminal/.style={rounded rectangle,minimum size=6mm,draw},tight/.style={inner sep=-2pt}]
\node (M5) [anchor=center]{};
\node (M1) [terminal, above left= of M5,anchor=south east] {$\Pi(M_1)=Q_{m-2}\setminus S_{m-2}$};
\node (M2) [terminal, above right= of M5,anchor=south west] {$\Pi(M_2)=Q_{m-2}\setminus S_{m-2}$};
\node (M3) [terminal, below right= of M5,anchor=north west] {$\Pi(M_3)=Q_{m-2}\setminus S_{m-2}$};
\node (M4) [terminal, below left= of M5,anchor=north east] {$\Pi(M_4)=Q_{m-2}\setminus S_{m-2}$};
\path[thick, draw]  (M1) edge ["$\varrho_3^{(m-2)}$"  inner sep=-1pt,near end,swap](M3);
\path[thick, draw]  (M1) edge [swap, "$\varrho_3^{(m-2)}$"](M4);
\path[thick, draw]  (M1) edge ["$\varrho_3^{(m-2)}$"](M2);
\path[thick, draw]  (M2) edge ["$\varrho_3^{(m-2)}$" tight, near start](M4);
\path[thick, draw]  (M2) edge ["$\varrho_3^{(m-2)}$"](M3);
\path[thick, draw]  (M3) edge ["$\varrho_3^{(m-2)}$"] (M4);
\path[thick, draw] (M1) edge [loop above, min distance=20mm,in=45,out=135,"$\varrho_3^{(m-2)}$"] (M1);
\path[thick, draw] (M2) edge [loop above, min distance=20mm,in=45,out=135,"$\varrho_3^{(m-2)}$"] (M2); 
\path[thick, draw] (M3) edge [loop below, min distance=20mm,in=-45,out=-135,swap, "$\varrho_3^{(m-2)}$"] (M3); 
\path[thick, draw] (M4) edge [loop below, min distance=20mm,in=-45,out=-135,swap, "$\varrho_3^{(m-2)}$"] (M4);   
\end{tikzpicture}}
\]
Thus, $(M_1,M_2,M_3,M_4)$ is an equitable partition of $\langle M \rangle_{\Gamma^{(m)}}$. Its partition matrix is given by:
\[
\begin{pmatrix}
 p_{33}^3(m-2) & p_{33}^3(m-2) & p_{33}^3(m-2) & p_{33}^3(m-2)\\
 p_{33}^3(m-2) & p_{33}^3(m-2) & p_{33}^3(m-2) & p_{33}^3(m-2)\\
 p_{33}^3(m-2) & p_{33}^3(m-2) & p_{33}^3(m-2) & p_{33}^3(m-2)\\
 p_{33}^3(m-2) & p_{33}^3(m-2) & p_{33}^3(m-2) & p_{33}^3(m-2)
 \end{pmatrix}
\]
Thus, the number of arcs in $\langle M \rangle_{\Gamma^{(m)}}$ is given by:
\[
\begin{pmatrix}
	\bb_m(2\bb_m-1)\\
	\bb_m(2\bb_m-1)\\
	\bb_m(2\bb_m-1)\\
	\bb_m(2\bb_m-1)	
\end{pmatrix}^T
\begin{pmatrix}
	\bb_m(\bb_m-1) & \bb_m(\bb_m-1) & \bb_m(\bb_m-1) & \bb_m(\bb_m-1)\\ 
	\bb_m(\bb_m-1) & \bb_m(\bb_m-1) & \bb_m(\bb_m-1) & \bb_m(\bb_m-1)\\ 
	\bb_m(\bb_m-1) & \bb_m(\bb_m-1) & \bb_m(\bb_m-1) & \bb_m(\bb_m-1)\\ 
	\bb_m(\bb_m-1) & \bb_m(\bb_m-1) & \bb_m(\bb_m-1) & \bb_m(\bb_m-1)
\end{pmatrix}
\begin{pmatrix}
	1\\
	1\\
	1\\
	1
\end{pmatrix}.
\]

\paragraph{About the second type of embeddings:} Since $\mathcal{C}^{(m)}$ is $3$-homogeneous, without loss of generality we can assume that $\tu=\icol{\tn\\\tn}$, $\tv=\icol{\te_1\\\tn}$, $\tw=\icol{\te_2\\\te_2}$.
\begin{lemma}
	Let $\tx\in \F_2^{2m}$. Then $\tx$ is a  joint neighbour of $\{\tu,\tv,\tw\}$ in $\Gamma^{(m)}$ if and only if 
	\begin{enumerate}
		\item $\tx_2\neq\tn$,
		\item $\tx_1^T\tx_2 = 0$,
		\item $\tx_2(1)=0$,
		\item $\tx_1(2)\neq\tx_2(2)$,
		\item $\tx_2(2)=1\Longrightarrow(\tx_2(3),\dots,\tx_2(m))\neq\tn^T$.
	\end{enumerate} 
\end{lemma}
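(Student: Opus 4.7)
The plan is simply to unfold the defining adjacency relation of $\Gamma^{(m)}$ against each of the three reference vertices $\tu$, $\tv$, $\tw$ and observe that the conjunction of the three resulting constraints rearranges to the five listed conditions. No combinatorial machinery is needed—this is a direct calculation in $\F_2$, which is presumably why the author writes ``Clear.''

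Recall that $(\ty,\tx)\in E^{(m)}$ iff $\ty_2\neq\tx_2$ and $(\ty_1+\tx_1)^T(\ty_2+\tx_2)=0$. Adjacency with $\tu=\icol{\tn\\\tn}$ gives at once $\tx_2\neq\tn$ and $\tx_1^T\tx_2=0$, which are conditions (1) and (2). Next, adjacency with $\tv=\icol{\te_1\\\tn}$ reduces to (1) together with $\tx_1^T\tx_2+\tx_2(1)=0$; in the presence of (2) this collapses to $\tx_2(1)=0$, which is (3).

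For the third adjacency, with $\tw=\icol{\te_2\\\te_2}$, I would expand
\[
(\tx_1+\te_2)^T(\tx_2+\te_2) \;=\; \tx_1^T\tx_2 + \tx_1(2) + \tx_2(2) + 1,
\]
which modulo (2) vanishes iff $\tx_1(2)\neq\tx_2(2)$, i.e.\ condition (4). The remaining requirement $\tx_2\neq\te_2$, once $\tx_2(1)=0$ from (3) has been imposed, becomes exactly the implication in (5): if $\tx_2(2)=1$ then the tail $(\tx_2(3),\dots,\tx_2(m))$ must be non-zero. Reading each equivalence backwards gives the converse direction.

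There is essentially no obstacle; the only mild bookkeeping is tracking which of the three adjacency constraints become tautological once earlier ones are in force, so that the final list appears in its minimal, non-redundant form.
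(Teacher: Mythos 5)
Your proposal is correct: it is exactly the routine unfolding of the adjacency condition against $\tu$, $\tv$, $\tw$ (with the second–coordinate inequalities $\tx_2\neq\tn$, $\tx_2\neq\te_2$ accounting for (1) and (5)) that the paper dismisses with ``Clear.'' The calculation and the reduction to the non-redundant list (1)--(5) are accurate, so nothing further is needed.
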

\begin{proof}
	Clear.
\end{proof}
As usually, we decompose the set $M$ of joint neighbours of $\{\bar{u},\bar{v},\bar{w}\}$ in $\Gamma^{(m)}$:
\begin{align*}
	M_1 &= \{\tx\in M\mid \tx_1(1)=0, \tx_1(2)=0, \tx_2(1)=0, \tx_2(2)=1\},\\
	M_2 &= \{\tx\in M\mid \tx_1(1)=0, \tx_1(2)=1, \tx_2(1)=0, \tx_2(2)=0\},\\	
	M_3 &= \{\tx\in M\mid \tx_1(1)=1, \tx_1(2)=0, \tx_2(1)=0, \tx_2(2)=1\},\\	
	M_4 &= \{\tx\in M\mid \tx_1(1)=1, \tx_1(2)=1, \tx_2(1)=0, \tx_2(2)=0\},	
\end{align*}
and show that $(M_1,M_2,M_3,M_4)$ forms an equitable partition of $\langle M\rangle_{\Gamma^{(m)}}$. For each $i\in\{1,\dots,4\}$, the restriction of the projection $\Pi$ from \eqref{projction2} to $M_i$ is one-to-one. The projection of $\langle M \rangle_{\Gamma^{(m)}}$ under $\Pi$ in $\mathcal{C}^{(m-2)}$ is given by 
\[\scalebox{0.75}{
\begin{tikzpicture}[node distance=20mm, terminal/.style={rounded rectangle,minimum size=6mm,draw},tight/.style={inner sep=-2pt}]
\node(M5)[anchor=center]{};
\node (M1) [terminal, above left= of M5,anchor=south east] {$\Pi(M_1)=Q_{m-2}\setminus S_{m-2}$};
\node (M2) [terminal, above right= of M5,anchor=south west] {$\Pi(M_2)=Q_{m-2}\setminus S_{m-2}$};
\node (M3) [terminal, below right= of M5,anchor=north west] {$\Pi(M_3)=Q_{m-2}\setminus S_{m-2}$};
\node (M4) [terminal, below left= of M5,anchor=north east] {$\Pi(M_4)=Q_{m-2}\setminus S_{m-2}$};
\path[thick, draw]  (M1) edge ["$\varrho_3^{(m-2)}$"  inner sep=-1pt,near end,swap](M3);
\path[thick, draw]  (M1) edge [swap, "$\varrho_4^{(m-2)}$"](M4);
\path[thick, draw]  (M1) edge ["$\varrho_4^{(m-2)}$"](M2);
\path[thick, draw]  (M2) edge ["$\varrho_3^{(m-2)}$" tight, near start](M4);
\path[thick, draw]  (M2) edge ["$\varrho_4^{(m-2)}$"](M3);
\path[thick, draw]  (M3) edge ["$\varrho_4^{(m-2)}$"] (M4);
\path[thick, draw] (M1) edge [loop above, min distance=20mm,in=45,out=135,"$\varrho_3^{(m-2)}$"] (M1);
\path[thick, draw] (M2) edge [loop above, min distance=20mm,in=45,out=135,"$\varrho_3^{(m-2)}$"] (M2); 
\path[thick, draw] (M3) edge [loop below, min distance=20mm,in=-45,out=-135,swap, "$\varrho_3^{(m-2)}$"] (M3); 
\path[thick, draw] (M4) edge [loop below, min distance=20mm,in=-45,out=-135,swap, "$\varrho_3^{(m-2)}$"] (M4);   
\end{tikzpicture}}
\]
Consequently, $(M_1,\dots,M_4)$ is an equitable partition of $\langle M \rangle_{\Gamma^{(m)}}$. Its partition matrix is given by:
\[
\begin{pmatrix}
p_{33}^3(m-2) & p_{34}^3(m-2) & p_{33}^3(m-2) & p_{34}^3(m-2)\\
p_{34}^3(m-2) & p_{33}^3(m-2) & p_{34}^3(m-2) & p_{33}^3(m-2)\\
p_{33}^3(m-2) & p_{34}^3(m-2) & p_{33}^3(m-2) & p_{34}^3(m-2)\\
p_{34}^3(m-2) & p_{33}^3(m-2) & p_{34}^3(m-2) & p_{33}^3(m-2)
 \end{pmatrix}
\]
Thus, the number of arcs in $\langle M \rangle_{\Gamma^{(m)}}$ is equal to 
\[
\begin{pmatrix}
	\bb_m(2\bb_m-1)\\
	\bb_m(2\bb_m-1)\\
	\bb_m(2\bb_m-1)\\
	\bb_m(2\bb_m-1)	
\end{pmatrix}^T
\begin{pmatrix}
	\bb_m(\bb_m-1) & \bb_m(\bb_m-1) & \bb_m(\bb_m-1) & \bb_m(\bb_m-1)\\ 
	\bb_m(\bb_m-1) & \bb_m(\bb_m-1) & \bb_m(\bb_m-1) & \bb_m(\bb_m-1)\\ 
	\bb_m(\bb_m-1) & \bb_m(\bb_m-1) & \bb_m(\bb_m-1) & \bb_m(\bb_m-1)\\ 
	\bb_m(\bb_m-1) & \bb_m(\bb_m-1) & \bb_m(\bb_m-1) & \bb_m(\bb_m-1)
\end{pmatrix}
\begin{pmatrix}
	1\\
	1\\
	1\\
	1
\end{pmatrix}.
\]

\paragraph{About the third type of embeddings:} Since $\mathcal{C}^{(m)}$ is $3$-homogeneous, without loss of generality we can assume that $\tu=\icol{\tn\\\tn}$, $\tv=\icol{\te_1\\\te_1}$, $\tw=\icol{\te_1+\te_2\\\te_2}$.
\begin{lemma}
	Let $\tx\in \F_2^{2m}$. Then $\tx$ is a  joint neighbour of $\{\tu,\tv,\tw\}$ in $\Gamma^{(m)}$ if and only if 
	\begin{enumerate}
		\item $\tx_2\neq\tn$,
		\item $\tx_1^T\tx_2 = 0$,
		\item $\tx_1(1)\neq\tx_2(1)$,
		\item $\tx_2(2)=\tx_1(2)+\tx_1(1)$,
		\item $\tx_2(1)=1\Longrightarrow (\tx_2(2),\dots,\tx_2(m))\neq\tn^T$,
		\item $\tx_2(2)=1\Longrightarrow (\tx_2(1),\tx_2(3),\dots,\tx_2(m))\neq\tn^T$,
		\item $\tx_2(1)=\tx_2(2)=1\Longrightarrow(\tx_2(3),\dots,\tx_2(m))\neq\tn^T$.
	\end{enumerate} 
\end{lemma}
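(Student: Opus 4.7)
The plan is to verify the characterisation by translating, one at a time, the three adjacency requirements $\tx\sim\tu$, $\tx\sim\tv$, $\tx\sim\tw$ via the definition of $E^{(m)}$: two vertices $\ty,\tz$ are adjacent iff $(\ty_1+\tz_1)^T(\ty_2+\tz_2)=0$ and $\ty_2\neq\tz_2$. This reduces the lemma to a short $\F_2$-arithmetic check, and the forward and backward implications will be read off the same set of expansions.

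Adjacency with $\tu=\icol{\tn\\\tn}$ gives items~(1) and~(2) at once. For $\tv=\icol{\te_1\\\te_1}$, one expands $(\tx_1+\te_1)^T(\tx_2+\te_1)=\tx_1^T\tx_2+\tx_1(1)+\tx_2(1)+1$; modulo~(2) this vanishes precisely when $\tx_1(1)\neq\tx_2(1)$, which is item~(3), while the constraint $\tx_2\neq\te_1$ is item~(5). For $\tw=\icol{\te_1+\te_2\\\te_2}$, one expands $(\tx_1+\te_1+\te_2)^T(\tx_2+\te_2)=\tx_1^T\tx_2+\tx_1(2)+\tx_2(1)+\tx_2(2)+1$; using~(2) and substituting $\tx_2(1)=1+\tx_1(1)$ from~(3), the vanishing collapses to $\tx_2(2)=\tx_1(1)+\tx_1(2)$, which is item~(4), and $\tx_2\neq\te_2$ is item~(6). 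The reverse implication is obtained by re-reading the same expansions in the opposite direction.

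It remains to account for item~(7), which is not an independent adjacency constraint but a consequence of~(2), (3), and~(4): if $\tx_2=\te_1+\te_2$, then~(3) forces $\tx_1(1)=0$, (4) then forces $\tx_1(2)=1$, and consequently $\tx_1^T\tx_2=\tx_1(1)\tx_2(1)+\tx_1(2)\tx_2(2)=0\cdot 1+1\cdot 1=1$, contradicting~(2). It is stated explicitly only so that the ensuing partition of the set $M$ of joint neighbours, indexed by the values of $\tx_1(1),\tx_1(2),\tx_2(1),\tx_2(2)$, can be read off without further case analysis. There is no genuine obstacle here; the whole verification is the routine $\F_2$-expansion that the authors abbreviate as ``Clear.''
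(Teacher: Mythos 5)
Your verification is correct and is precisely the routine $\F_2$-expansion that the paper leaves as ``Clear'': adjacency to $\tu$, $\tv$, $\tw$ yields items (1)--(6), and you correctly handle the one non-obvious point, namely that (7) is not a separate adjacency constraint (adjacency only excludes $\tx_2\in\{\tn,\te_1,\te_2\}$) but a consequence of (2), (3), (4). Nothing further is needed.
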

\begin{proof}
	Clear.
\end{proof}
As usually, we decompose the set $M$ of joint neighbours of $\{\bar{u},\bar{v},\bar{w}\}$ in $\Gamma^{(m)}$:
\begin{align*}
	M_1 &= \{\tx\in M\mid \tx_1(1)=0, \tx_1(2)=0, \tx_2(1)=1, \tx_2(2)=0\},\\
	M_2 &= \{\tx\in M\mid \tx_1(1)=0, \tx_1(2)=1, \tx_2(1)=1, \tx_2(2)=1\},\\	
	M_3 &= \{\tx\in M\mid \tx_1(1)=1, \tx_1(2)=0, \tx_2(1)=0, \tx_2(2)=1\},\\	
	M_4 &= \{\tx\in M\mid \tx_1(1)=1, \tx_1(2)=1, \tx_2(1)=0, \tx_2(2)=0\}.	
\end{align*}
For each $i\in\{1,\dots,4\}$, the restriction of the projection $\Pi$ from \eqref{projction2} to $M_i$ is one-to-one. The projection of $\langle M\rangle_{\Gamma^{(m)}}$ under $\Pi$ in $\mathcal{C}^{(m-2)}$ is given by 
\[\scalebox{0.75}{
\begin{tikzpicture}[node distance=20mm, terminal/.style={rounded rectangle,minimum size=6mm,draw},tight/.style={inner sep=-2pt}]
\node (M5) [anchor=center]{};
\node (M1) [terminal, above left= of M5,anchor=south east] {$\Pi(M_1)=Q_{m-2}\setminus S_{m-2}$};
\node (M2) [terminal, above right= of M5,anchor=south west] {$\Pi(M_2)=Q_{m-2}\setminus S_{m-2}$};
\node (M3) [terminal, below right= of M5,anchor=north west] {$\Pi(M_3)=Q_{m-2}\setminus S_{m-2}$};
\node (M4) [terminal, below left= of M5,anchor=north east] {$\Pi(M_4)=Q_{m-2}\setminus S_{m-2}$};
\path[thick, draw]  (M1) edge ["$\varrho_4^{(m-2)}$"  inner sep=-1pt,near end,swap](M3);
\path[thick, draw]  (M1) edge [swap, "$\varrho_4^{(m-2)}$"](M4);
\path[thick, draw]  (M1) edge ["$\varrho_4^{(m-2)}$"](M2);
\path[thick, draw]  (M2) edge ["$\varrho_4^{(m-2)}$" tight, near start](M4);
\path[thick, draw]  (M2) edge ["$\varrho_4^{(m-2)}$"](M3);
\path[thick, draw]  (M3) edge ["$\varrho_4^{(m-2)}$"] (M4);
\path[thick, draw] (M1) edge [loop above, min distance=20mm,in=45,out=135,"$\varrho_3^{(m-2)}$"] (M1);
\path[thick, draw] (M2) edge [loop above, min distance=20mm,in=45,out=135,"$\varrho_3^{(m-2)}$"] (M2); 
\path[thick, draw] (M3) edge [loop below, min distance=20mm,in=-45,out=-135,swap, "$\varrho_3^{(m-2)}$"] (M3); 
\path[thick, draw] (M4) edge [loop below, min distance=20mm,in=-45,out=-135,swap, "$\varrho_3^{(m-2)}$"] (M4);   
\end{tikzpicture}} 
\]
Consequently, $(M_1,M_2,M_3,M_4)$ forms an equitable partition of $\langle M \rangle_{\Gamma^{(m)}}$. Its partition matrix is given by:
\[
\begin{pmatrix}
 p_{33}^3(m-2) & p_{34}^3(m-2) & p_{34}^3(m-2) & p_{34}^3(m-2)\\
 p_{34}^3(m-2) & p_{33}^3(m-2) & p_{34}^3(m-2) & p_{34}^3(m-2)\\
 p_{34}^3(m-2) & p_{34}^3(m-2) & p_{33}^3(m-2) & p_{34}^3(m-2)\\
 p_{34}^3(m-2) & p_{34}^3(m-2) & p_{34}^3(m-2) & p_{33}^3(m-2)
 \end{pmatrix}
\]
Thus, the number of arcs in $\langle M \rangle_{\Gamma^{(m)}}$ is equal to 
\[
\begin{pmatrix}
	\bb_m(2\bb_m-1)\\
	\bb_m(2\bb_m-1)\\
	\bb_m(2\bb_m-1)\\
	\bb_m(2\bb_m-1)	
\end{pmatrix}^T
\begin{pmatrix}
	\bb_m(\bb_m-1) & \bb_m(\bb_m-1) & \bb_m(\bb_m-1) & \bb_m(\bb_m-1)\\ 
	\bb_m(\bb_m-1) & \bb_m(\bb_m-1) & \bb_m(\bb_m-1) & \bb_m(\bb_m-1)\\ 
	\bb_m(\bb_m-1) & \bb_m(\bb_m-1) & \bb_m(\bb_m-1) & \bb_m(\bb_m-1)\\ 
	\bb_m(\bb_m-1) & \bb_m(\bb_m-1) & \bb_m(\bb_m-1) & \bb_m(\bb_m-1)
\end{pmatrix}
\begin{pmatrix}
	1\\
	1\\
	1\\
	1
\end{pmatrix}.
\]

Note that in all three cases we counted the same number of arcs in $\langle M \rangle_{\Gamma^{(m)}}$. Thus, the number $\#(\Gamma^{(m)},\bbT_4,\kappa)$ does not depend on $\kappa$. In other words, $\Gamma^{(m)}$ is $\bbT_4$-regular. 

This finishes the proof that $\Gamma^{(m)}$ is $(3,5)$-regular.\qed

\section{Outlook}
In \cite{Iva94}, A.V.~Ivanov described another series of $(2,4)$-regular graphs whose existence is related to the unique (up to equivalence) non-degenerate quadratic form of Witt-index $m-1$ on $\F_2^{2m}$. Let $\hq^{(m)}$ be such a  form.
 By $\hQ_m$ we denote the quadric defined by $\hq^{(m)}$, and by $\hS_m$ a maximal singular subspace.  The bilinear form associated with $\hq^{(m)}$ is given by $[\tx,\ty]^{(m)}=\hq^{(m)}(\tx+\ty)+\hq^{(m)}(\tx)+\hq^{(m)}(\ty)$. The given data give rise to the following five binary relations on $\F_2^{2m}$:
\begin{align*}
	\sigma_1^{(m)} &= \{(\tv,\tw)\mid \tv=\tw\}, & \sigma_2^{(m)} &=\{(\tv,\tw)\mid \tv+\tw\in\hS_m\setminus\{\tn\}\},\\
	\sigma_3^{(m)} &=\{(\tv,\tw)\mid\tv+\tw\in\hQ_m\setminus\hS_m\}, & \sigma_4^{(m)} &=\{(\tv,\tw)\mid \tv+\tw\in\hS_m^\perp\setminus\hS_m\},\\
	\sigma_5^{(m)} &=\{(\tv,\tw)\mid \tv+\tw\in \F_2^{2m}\setminus(\hQ_m\cup\hS_m^\perp)\}.
\end{align*}
It is known (cf.~ \cite{Iva94})  that the relational structure  $\hcC^{(m)}\coloneqq (\F_2^{2m},\sigma_1^{(m)},\sigma_2^{(m)},\sigma_3^{(m)},\sigma_4^{(m)},\sigma_5^{(m)})$ is a coherent configuration and that the graph $\hGamma^{(m)}\coloneqq (\F_2^{2m},\sigma_2^{(m)}\cup\sigma_5^{(m)})$ is $(2,4)$-regular. In the course of our research of the Brouwer-Ivanov-Klin-graphs we had also a look onto the graphs $\hGamma^{(m)}$. So far we were able to show that for all $m\ge 5$ we have that: 
	\begin{enumerate}
		\item $\Aut(\hGamma^{(m)})=\Aut(\hcC^{(m)})$,
		\item $\hcC^{(m)}$ is $3$-homogeneous,
		\item $\hGamma^{(m)}$ is $(3,5)$-regular,
		\item $\hGamma^{(m)}$ is not $2$-homogeneous.
	\end{enumerate} 
The proof is postponed to a subsequent publication, as it uses different techniques and would explode the size of this paper.


\end{document}